\newtheorem{Th}{Theorem}[section]
\newtheorem{Cor}[Th]{Corollary}
\newtheorem{Lem}[Th]{Lemma}
\theoremstyle{definition}
\newtheorem{Def}[Th]{Definition}
\newtheorem{Rem}[Th]{Remark}
\newcommand{\N}{\mathbb N}
\begin{document}

\title{\bf Quotient of  Topological Ternary Semigroup }
\author{S. Samanta.\footnote{samantapmju@gmail.com;
Department of Mathematics, Goenka College of Commerce and B.A.;
210 B. B. Ganguly Street, Kolkata -- 700012, India.} ,
S. Jana.\footnote {sjpm@caluniv.ac.in;
Department of Pure Mathematics, University of Calcutta; 35, Ballygunge Circular Road, Kolkata --- 700019, India.}  \&
S. Kar.\footnote{karsukhendu@yahoo.co.in; Department of
Mathematics, Jadavpur University; 188, Raja S. C. Mallick Road,
Kolkata --- 700032, India.}}

\date{}
\maketitle

\begin{abstract}
In this paper we introduce a quotient structure on topological ternary semigroup by defining a congruence suitably. We have found conditions under which this quotient structure becomes a topological ternary semigroup. We have also obtained conditions that make this quotient a topological ternary group, whenever the base structure is a topological ternary group.
\end{abstract}

\noindent{\bf AMS Subject Classification:}  20M99, 22A99
\\
\noindent{{\bf Keywords:}  Ternary group, normal ternary subgroup, topological ternary semigroup, topological ternary group , $k_{w}$-space.
}

\section{Introduction}

A. D. Wallace considered and studied the problem of making quotient of a topological semigroup by means of a suitable congruence. In the paper \cite{Wallace} he proved the following: If $S$ is a compact topological semigroup and $\rho$ is a closed congruence on $S$ then $S/\rho$ becomes a compact topological semigroup. Lawson and Madison studied Rees version of the problem and in \cite{Lawson} they generalized Wallace's result on a locally compact $\sigma$-compact topological semigroup. Later many mathematicians  like Gonzalez \cite{Gonzalez}, Gutik and Pavlyk \cite{Gutik}, Khosravi \cite{Behnam} used algebraic and topological conditions to make the quotient space a topological semigroup.

In the present paper we study quotient structure of a topological ternary semigroup with the help of a congruence. We have found conditions to make the quotient space of a topological ternary semigroup again a topological ternary semigroup with respect to the quotient topology. To resolve this issue we have faced mainly two problems: the first one is to show the continuity of induced ternary multiplication in quotient space and second one is to prove the quotient space a  Hausdorff space, since it is not preserved under continuity. 
After this we use it to generalize Wallace's result in ternary system. Then we prove that the result is valid if  compactness is replaced by $k_{w}$-space. Thereafter we generalize Lawson and Madison's theorem in ternary system. Finally, we study the quotient structure of a topological ternary group. This is done with the help of a normal ternary subgroup. We prove that this quotient space becomes a topological ternary group under some suitable condition.

 \section{Prerequisites }
 In this section we mainly mention some prerequisites, most of which are available in the literature of ternary semigroup theory and  few standard  topological definitions that are required for our main discussion. 
 
 \begin{Def} \cite{Bro}
 A non-empty set $S$ together with a ternary operation, called \emph{ternary multiplication} and denoted by juxtaposition, is said to be a \emph{ternary  semigroup}  if\\ \centerline{$(abc)de=a(bcd)e=ab(cde)$, for all $a, b, c, d, e\in S$.}
 \end{Def}

 \begin{Def}\cite{Km} An element $a$ of a ternary semigroup $S$ is  said to be \emph{invertible} in $S$ if there exists an element $b\in S$ such that $ abx = bax = xab = xba = x$, for all $x\in S$. Then $b$ is called the \emph{inverse} of $a$ (it is unique, if exists).
 \end{Def}
 \begin{Def} \cite{Bro} A ternary semigroup $S$ is called a \emph{ternary group} \index{Ternary group} if for $a,b,c\in S$ the equations $abx=c, axb=c$ and $xab=c$ have solutions in $S$.
 	
 	A non-empty subset $T$ of a ternary semigroup $S$ is called a \emph{ternary subgroup} of $S$ if $T$ is a ternary group with respect to the ternary operation of $S$ restricted to $T$.
 	\label{ch1:d:tg}.\end{Def}

 \begin{Def}\cite{Sj}
 	For any ternary subgroup $N$ of a ternary group $T$, the sets $aNN (NNa)$, $\forall \,\ a\in S$, are called \textit{left(right) cosets} of $N$ in $T$.
 \end{Def}
 
 \begin{Def} \cite{Sj} A ternary subgroup $H$ of a ternary group $S$ is said to be \emph{normal ternary subgroup} of $S$ if it satisfies any one of the following equivalent conditions:\\
 	 (a) $gHg^{-1}\subseteq H$ for all $g\in S$; \\
 	 (b) $ghH= Hgh$ for all $g,h\in S $;\\
 	 (c) $HHg= gHH= HgH$ for all $g\in S$.\end{Def}
 
 \begin{Def} \cite{Sioson} A non-empty subset $I$ of a ternary semigroup $S$ is said to be an \emph{ideal} of $S$ if $SSI\subseteq I$, $SIS\subseteq I$ and $ISS\subseteq I$.
 \end{Def}

 \begin{Def}\cite{Km} An equivalence relation $\rho$ on a ternary semigroup $S$ is called \\
 	(i) \textit{left congruence} if $a\rho b\Rightarrow (sta)\rho (stb)$ for all $a,b,s,t\in S$;\\
 	(ii) \textit{right congruence} if $a\rho b\Rightarrow (ast)\rho (bst)$ for all $a, b, s, t \in S$;\\
 	(iii) \textit{lateral congruence} if $a\rho b\Rightarrow (sat)\rho (sbt)$ for all $a, b, s, t\in S $;\\
 	(iv) \textit{congruence} if it is left congruence, right congruence and lateral congruence.
 	\end{Def}
 
 \begin{Th}\cite{Km}\label{T:th1} An equivalence relation $\rho$ on a ternary semigroup $S$ is a congruence iff $(a,b), (c,d), (e,f)\in \rho\Rightarrow(ace,bdf)\in \rho ,\,\, \forall a,b,c, d, e, f\in S$. \end{Th}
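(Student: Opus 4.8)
The plan is to prove the two implications separately, in each case reducing everything to repeated applications of reflexivity and transitivity of $\rho$ together with the defining properties of left, right and lateral congruences; no auxiliary lemma is needed.

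For the ``only if'' direction, assume $\rho$ is a congruence and take $(a,b),(c,d),(e,f)\in\rho$. The idea is to pass from $ace$ to $bdf$ by altering one coordinate at a time. First apply the right congruence property to $(a,b)\in\rho$ (with the roles of $s,t$ played by $c,e$) to get $(ace,bce)\in\rho$; next apply the lateral congruence property to $(c,d)\in\rho$ (with $s=b$, $t=e$) to get $(bce,bde)\in\rho$; finally apply the left congruence property to $(e,f)\in\rho$ (with $s=b$, $t=d$) to get $(bde,bdf)\in\rho$. Transitivity of $\rho$ then gives $(ace,bdf)\in\rho$, as required.

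For the ``if'' direction, assume the displayed implication holds and derive each of the three one-sided conditions. Given $a\rho b$ and arbitrary $s,t\in S$, reflexivity of $\rho$ furnishes $(s,s),(t,t)\in\rho$; feeding the triples $\big((s,s),(t,t),(a,b)\big)$, $\big((a,b),(s,s),(t,t)\big)$ and $\big((s,s),(a,b),(t,t)\big)$ into the hypothesis yields $(sta,stb)\in\rho$, $(ast,bst)\in\rho$ and $(sat,sbt)\in\rho$ respectively. Hence $\rho$ is a left, right and lateral congruence, i.e.\ a congruence.

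I do not anticipate any genuine obstacle here: the only points demanding care are selecting the correct intermediate element and the matching ``slot'' (left, lateral, or right) at each stage of the chain in the first direction, and remembering to invoke reflexivity to manufacture the trivial pairs $(s,s)$ and $(t,t)$ in the second. Both directions are otherwise immediate from the axioms, so the argument should be short.
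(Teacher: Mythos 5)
Your proof is correct: the chain $ace\,\rho\,bce\,\rho\,bde\,\rho\,bdf$ uses exactly the right (right, lateral, left) congruence property at each slot, and in the converse direction reflexivity supplies the pairs $(s,s),(t,t)$ needed to recover all three one-sided conditions. The paper itself states this theorem as a cited result from the literature without reproducing a proof, and your argument is the standard one that the cited source uses, so there is nothing further to reconcile.
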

 
\begin{Def} \cite{Willard} A Hausdorff topological space $X$ is called  a \emph{$k$-space} if it has the weak topology determined by the family of its compact subspaces.\end{Def}

\begin{Def} \cite{Willard}
A locally compact space  is  \emph{$\sigma$-compact}\index{$\sigma$-compact} if it can be expressed as the union of at most countable number of compact spaces.
\end{Def}

\begin{Def} \cite{Willard}
A topological space $X$ is called \emph{$k_{w}$-space} if it is the the union of a countable collection $\{K_{n}\}$ of compact subsets of $X$ so that a subset $A\subseteq X$ is closed whenever $A\cap K_{n}$ is close in $K_{n}$ for all $n\in \N$.\end{Def}

 So it follows that every $\sigma$-compact space is $k_w$-space.

\begin{Def}
If $\rho$ is a congruence (equivalence relation) on a topological ternary semigroup $S$, then it is called a \emph{closed congruence (closed equivalence relation)} if $\rho$  is a closed subset of $S\times S$.
If $\rho$ is an equivalence relation on a set $X$, then the set  $x\rho=\{y\in X: (x,y)\in \rho \}$ is called the \emph{$\rho$-class} of $X$ containing $x$. It is well known result that $\rho$-classes of $X$ are disjoint and $X$ is the union of all disjoint $\rho$-classes of $X$. The set $X/\rho$ of $\rho$-classes of $X$ is called \textit{quotient of $X$ mod $\rho$}. The function $\pi_\rho : X\rightarrow X /\rho$ which assigns to each $x\in S$, the $\rho$-class containing $x$ is called the \textit{natural map}. We note that for each $x\in X$, the set $\pi_\rho^{-1}(\pi_\rho(x))$  is the $\rho$-class of $X$ containing $ x $. This map $\pi_\rho$ is surjective.	
\end{Def}

\begin{Th}\cite{Km}\label{D:def} 
 Let $\rho$ be a congruence on a ternary semigroup $S$. Define a ternary multiplication on the quotient set $S/\rho$ by $(a\rho)(b\rho)(c\rho)=(abc)\rho,\,\ \forall a, b, c\in S$. Then $S/\rho$ is a ternary semigroup.	
\end{Th}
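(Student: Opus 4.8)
The plan is to split the statement into two independent tasks: first, verifying that the prescribed rule $(a\rho)(b\rho)(c\rho)=(abc)\rho$ genuinely defines a function on $(S/\rho)^{3}$, and second, checking the defining ternary-semigroup identity on the quotient. The first task is the one place where the congruence hypothesis is indispensable; the second is a routine transfer of the identity from $S$.

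\emph{Well-definedness.} Suppose $a\rho=a'\rho$, $b\rho=b'\rho$, $c\rho=c'\rho$, which is to say $(a,a'),(b,b'),(c,c')\in\rho$. For the rule to be unambiguous one must show $(abc)\rho=(a'b'c')\rho$, i.e. $(abc,a'b'c')\in\rho$. But this is exactly the conclusion of Theorem~\ref{T:th1}, which characterizes a congruence on a ternary semigroup by precisely this ``componentwise'' compatibility: $(a,a'),(c,c'),(e,e')\in\rho\Rightarrow(ace,a'c'e')\in\rho$. Hence the ternary multiplication on $S/\rho$ is well-defined, and the natural map $\pi_\rho$ respects the ternary operations by construction.

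\emph{The ternary-semigroup identity.} Fix $a,b,c,d,e\in S$. Applying the definition of the quotient multiplication and then the identity $(abc)de=a(bcd)e=ab(cde)$ valid in $S$, one computes
\begin{align*}
\bigl((a\rho)(b\rho)(c\rho)\bigr)(d\rho)(e\rho)
&= \bigl((abc)\rho\bigr)(d\rho)(e\rho) = \bigl((abc)de\bigr)\rho\\
&= \bigl(a(bcd)e\bigr)\rho = (a\rho)\bigl((bcd)\rho\bigr)(e\rho)
= (a\rho)\bigl((b\rho)(c\rho)(d\rho)\bigr)(e\rho),
\end{align*}
and in the same manner both quantities equal $(a\rho)(b\rho)\bigl((c\rho)(d\rho)(e\rho)\bigr)$. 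Since $a\rho,\dots,e\rho$ range over all of $S/\rho$, this establishes the defining identity, so $S/\rho$ is a ternary semigroup.

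\emph{Anticipated difficulty.} There is no real obstacle once Theorem~\ref{T:th1} is available: the entire weight of the argument rests on that characterization, which guarantees that distributing three independent substitutions through a single ternary product keeps the result inside one $\rho$-class. Absent Theorem~\ref{T:th1}, one would instead replace $a$ by $a'$ using that $\rho$ is a right congruence, then $b$ by $b'$ using that it is a lateral congruence, then $c$ by $c'$ using that it is a left congruence, and chain the three resulting $\rho$-relations by transitivity — which is how Theorem~\ref{T:th1} is proved in the first place.
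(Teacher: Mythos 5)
Your proof is correct: well-definedness is exactly the content of Theorem~\ref{T:th1}, and the ternary-semigroup identity then transfers from $S$ to $S/\rho$ by the computation you give. The paper itself states this result without proof (citing \cite{Km}), and your argument is precisely the standard one that reference uses, so there is nothing to add or repair.
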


\begin{Th}\label{T:th4}
Let $S$ be a ternary semigroup, $I$  be an ideal of $S$ and $\rho_1:=(I\times I)\cup\Delta(S)$, where $\Delta(S):=\{(x,x):x\in S\}$  is the diagonal of $S \times S$. Then $\rho_1$ is a congruence on $S$.\end{Th}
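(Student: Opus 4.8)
The plan is to first check that $\rho_1$ is an equivalence relation, and then to invoke Theorem~\ref{T:th1} so that the congruence property reduces to a single implication about products.

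\textbf{Step 1: $\rho_1$ is an equivalence relation.} Reflexivity is immediate, since $\Delta(S)\subseteq\rho_1$. Symmetry holds because both $\Delta(S)$ and $I\times I$ are symmetric subsets of $S\times S$: indeed, if $a,b\in I$ then $b,a\in I$. For transitivity I would argue by cases on a pair $(a,b),(b,c)\in\rho_1$: if either of these pairs lies in $\Delta(S)$ the conclusion $(a,c)\in\rho_1$ is trivial, while if both lie in $I\times I$ then $a,b,c\in I$, so $(a,c)\in I\times I\subseteq\rho_1$.

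\textbf{Step 2: $\rho_1$ is a congruence.} By Theorem~\ref{T:th1} it is enough to show that $(a,b),(c,d),(e,f)\in\rho_1$ implies $(ace,bdf)\in\rho_1$. If all three pairs lie in $\Delta(S)$, then $a=b$, $c=d$, $e=f$, hence $ace=bdf$ and $(ace,bdf)\in\Delta(S)\subseteq\rho_1$. Otherwise at least one of the three pairs lies in $I\times I$. If $(a,b)\in I\times I$, then $a,b\in I$, and the ideal condition $ISS\subseteq I$ yields $ace\in I$ and $bdf\in I$, so $(ace,bdf)\in I\times I\subseteq\rho_1$; the cases $(c,d)\in I\times I$ and $(e,f)\in I\times I$ are handled in exactly the same way using $SIS\subseteq I$ and $SSI\subseteq I$ respectively.

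\textbf{Anticipated obstacle.} There is no genuine difficulty here; the proof is essentially a bookkeeping exercise. The one point worth stating carefully is that in Step~2 it suffices for a \emph{single} one of the three pairs to lie in $I\times I$ in order to push both $ace$ and $bdf$ into $I$ --- one does not need all three pairs to come from $I\times I$ --- precisely because $I$ absorbs a product in which a factor from $I$ appears in \emph{any} of the three slots. Getting the organization of this case analysis right (and separating it cleanly from the all-diagonal case) is the only thing that needs attention.
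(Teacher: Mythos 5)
Your proof is correct: the case analysis for transitivity is exhaustive, and in Step~2 the reduction via Theorem~\ref{T:th1} together with the three absorption conditions $ISS\subseteq I$, $SIS\subseteq I$, $SSI\subseteq I$ covers every case, since any triple of pairs not all in $\Delta(S)$ must contain a pair in $I\times I$. The paper states this theorem without proof, and your argument is exactly the standard verification it implicitly relies on, so there is nothing to reconcile.
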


It follows from Theorem \ref{D:def} that $S/\rho_1$ is a ternary semigroup.

\begin{Def}
Let $S$ be a ternary semigroup and $I$ be an ideal of $S$. Then the ternary semigroup $S/((I\times I)\cup \Delta(S))$ i,e $S/\rho_1$ is called \emph{Rees quotient ternary semigroup} of $S$ mod $I$ and denoted by $S/I$.\end{Def}

\begin{Def} \cite{Rrr}
A ternary semigroup $S$ is said to be a \emph{topological ternary semigroup} if there exists a \textbf{Hausdorff} topology on $S$ such that the ternary multiplication
$\left.\begin{array}{c}
S\times S\times S\longrightarrow S\\ (x, y, z)\longmapsto xyz
\end{array}\right\}$
is continuous, $S\times S\times S$ being equipped with the product topology.\label{Def: ttsg}
\end{Def}

\begin{Def}\cite{SSS1}
A ternary group $S$ is said to be a \emph{topological ternary group} if there exists a \textbf{Hausdorff} topology on $S$ such that the ternary multiplication on $S$ and the inversion map $x\mapsto x^{-1}\ (x\in S)$  both are continuous.

That the inversion map on $S$ is continuous is equivalent to the condition : for each $x\in S$ and each open nbd. $W$ of $x^{-1}$ in $S$, $\exists$ an open nbd. $V$ of $x$ in $S$ such that $V^{-1}\subseteq W$.
\end{Def}

 \section{Quotient of a  Topological Ternary Semigroup }
 
As mentioned in the beginning, in this section we study quotient of a topological ternary semigroup using congruence suitably. Then we impose conditions to make the quotient space a topological ternary semigroup. Following lemmas are very important for continuation of our discussion.

\begin{Lem} \label{L:lem1}  Let $S$ be a ternary semigroup and $\rho$ be a congruence on $S$. Also let $f$ and $g$ be ternary multiplications on $S$ and $S/\rho$ respectively. Let $\pi_\rho : S\rightarrow S/\rho$ be a map defined by $\pi_\rho(a)= a\rho,\,\ \forall a\in S$. Then the following diagram commutes:
	
	\[
	\begin{tikzcd}[scale=20]
		S\times S\times S \arrow{r}{f} \arrow{d}{\pi_\rho\times \pi_\rho\times \pi_\rho}& S \arrow{d}{\pi_\rho} \\
		(S/\rho)\times (S/\rho)\times (S/\rho) \arrow{r}{g}& S/\rho
	\end{tikzcd}
	\]
\end{Lem}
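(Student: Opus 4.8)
The plan is to prove commutativity by a direct diagram chase: fix an arbitrary triple $(a,b,c)\in S\times S\times S$ and show that the two composite maps $\pi_\rho\circ f$ and $g\circ(\pi_\rho\times\pi_\rho\times\pi_\rho)$ send it to the same element of $S/\rho$. Since both sides of the desired identity are honest functions on $S\times S\times S$, pointwise agreement on every triple is precisely what commutativity of the square asserts, so nothing more than an element-wise verification is needed.

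Going along the top-then-right path, $f(a,b,c)=abc$ by the definition of the ternary multiplication $f$ on $S$, hence $\pi_\rho\bigl(f(a,b,c)\bigr)=\pi_\rho(abc)=(abc)\rho$. Going along the left-then-bottom path, $(\pi_\rho\times\pi_\rho\times\pi_\rho)(a,b,c)=(a\rho,\,b\rho,\,c\rho)$, and then applying $g$, the quotient ternary multiplication of Theorem \ref{D:def}, gives $g(a\rho,b\rho,c\rho)=(abc)\rho$ by the very formula $(a\rho)(b\rho)(c\rho)=(abc)\rho$ defining $g$. Thus both composites return $(abc)\rho$, and since $(a,b,c)$ was arbitrary, $\pi_\rho\circ f=g\circ(\pi_\rho\times\pi_\rho\times\pi_\rho)$, i.e.\ the diagram commutes.

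There is essentially no obstacle here; the only point that is not completely formal is that $g$ must be a well-defined operation on $\rho$-classes, independent of the chosen representatives $a,b,c$. But this is exactly the content of Theorem \ref{D:def}, which is available to us and which rests on $\rho$ being a congruence (equivalently, on the representative-independence criterion of Theorem \ref{T:th1}). Once $g$ is known to be well-defined, the computation above is unambiguous and the lemma follows. It is perhaps worth remarking, for the use of this lemma in the sequel, that $\pi_\rho\times\pi_\rho\times\pi_\rho$ is surjective (being a product of surjections), so the commutativity of the square is equivalent to the statement that $g$ is the unique map making it commute.
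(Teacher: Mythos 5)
Your proof is correct and follows essentially the same argument as the paper: an element-wise diagram chase on an arbitrary triple $(a,b,c)$, showing both composites equal $(abc)\rho$ via the defining formula of $g$ from Theorem \ref{D:def}. Your added remarks on well-definedness of $g$ and surjectivity of $\pi_\rho\times\pi_\rho\times\pi_\rho$ are fine but not needed beyond what the paper already records.
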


\begin{proof}
 $(g\circ(\pi_\rho \times \pi_\rho \times \pi_\rho))(a,b,c)=g((\pi_\rho \times \pi_\rho \times \pi_\rho)(a,b,c))= g (\pi_\rho (a), \pi_\rho(b), \pi_\rho(c))=g(a\rho, b\rho,c\rho)=(a\rho)(b\rho)(c\rho)=(abc)\rho =\pi_\rho(abc)=\pi_\rho(f(a,b,c))=(\pi_\rho\circ f)(a,b,c)$, $\forall\, a,b,c\in S$. Hence we have $\pi_\rho\circ f=g\circ (\pi_\rho \times \pi_\rho \times \pi_\rho)$. Therefore the diagram commutes.
\end{proof}

\begin{Lem}
   Let $S$ be a topological space and $\rho$ be a congruence on $S$. Let $\pi_\rho: S \rightarrow S/\rho$ be the natural map. Then $\rho= (\pi_\rho \times \pi_\rho)^{-1}(\Delta(S/\rho))$
\label{L: lem2}\end{Lem}

\begin{proof}
Straightforward.
\end{proof}

\begin{Lem}\cite{Carruth}  Let $\rho$ be a closed equivalence relation on a topological space $X$ and $K$ be a compact subset of $X$. Then $\pi_\rho^{-1}\pi_\rho(K)$ is closed.
\label{L:lem3}\end{Lem}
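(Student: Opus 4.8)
The plan is to prove that the complement of $\pi_\rho^{-1}\pi_\rho(K)$ in $X$ is open, exploiting the compactness of $K$ and the fact that $\rho$ is a closed subset of $X\times X$. First I would unwind the definition: since $\pi_\rho^{-1}(\pi_\rho(K))$ is the $\rho$-saturation of $K$, we have
\[
\pi_\rho^{-1}\pi_\rho(K)=\{x\in X: (x,k)\in\rho \text{ for some } k\in K\}.
\]
So a point $x$ lies in the complement precisely when $(x,k)\notin\rho$ for every $k\in K$.

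Now fix such an $x$. For each $k\in K$, the point $(x,k)$ lies in the open set $(X\times X)\setminus\rho$, so by definition of the product topology there exist open sets $U_k\ni x$ and $V_k\ni k$ in $X$ with $(U_k\times V_k)\cap\rho=\emptyset$. The family $\{V_k:k\in K\}$ is an open cover of the compact set $K$, hence admits a finite subcover $V_{k_1},\dots,V_{k_n}$. Put $U:=\bigcap_{i=1}^{n}U_{k_i}$, an open neighbourhood of $x$. For each $i$ we have $U\times V_{k_i}\subseteq U_{k_i}\times V_{k_i}$, so $(U\times V_{k_i})\cap\rho=\emptyset$; taking the union over $i$ and using $K\subseteq\bigcup_{i=1}^{n}V_{k_i}$ yields $(U\times K)\cap\rho=\emptyset$. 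This says exactly that no element of $U$ is $\rho$-related to any element of $K$, i.e. $U\cap\pi_\rho^{-1}\pi_\rho(K)=\emptyset$. Since every point of the complement therefore has an open neighbourhood contained in the complement, $\pi_\rho^{-1}\pi_\rho(K)$ is closed.

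I do not expect a serious obstacle; the only point requiring care is the bookkeeping with the finitely many boxes $U_{k_i}\times V_{k_i}$, where one must first pass to the common intersection $U$ in the first coordinate before taking the union in the second coordinate, so that a single neighbourhood of $x$ works simultaneously against all of $K$. Equivalently, and more conceptually, the same fact can be packaged as follows: $\pi_\rho^{-1}\pi_\rho(K)$ is the image of the set $\rho\cap(X\times K)$ — which is closed in the subspace $X\times K$ because $\rho$ is closed in $X\times X$ — under the projection $X\times K\to X$, and this projection is a closed map since $K$ is compact (the tube lemma). Either formulation gives the conclusion, and no separation hypothesis on $X$ beyond the closedness of $\rho$ is used.
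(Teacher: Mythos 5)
Your proof is correct and complete: the saturation $\pi_\rho^{-1}\pi_\rho(K)=\{x\in X:(x,k)\in\rho\ \text{for some}\ k\in K\}$, and your compactness/tube-lemma argument (or its repackaging as the closedness of the projection $X\times K\to X$ applied to $\rho\cap(X\times K)$) is exactly the standard argument; the paper itself gives no proof, citing \cite{Carruth}, and your argument is essentially the one found there. You are also right that only closedness of $\rho$ and compactness of $K$ are used, with no separation axiom on $X$ needed.
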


 \begin{Lem}
 \cite{Carruth}  Let $X$ be a compact Hausdorff space and $\rho$ be a closed equivalence on $S$. Then:
 \\(a) $\pi_\rho: X\rightarrow X/\rho$ is closed;
 \\(b) if $V$ is any open subset of $X$ then $V_{\rho}= \{x\in X: \pi_\rho^{-1}\pi_\rho(x)\subset V\}$ is open and $\pi_\rho$-saturated;	
 \\(c) if $A$ is a closed $\pi_\rho$-saturated subset of $X$ and $W$ is an open subset of $X$ such that $A\subseteq W$ then there exists an open $\pi_\rho$-saturated subset $U$ and a closed $\pi_\rho$-saturated subset $K$ such that $A\subset U \subset K \subset W$; 
 \\(d) $X/\rho$ is Hausdorff.
 \label{L:lem4}\end{Lem}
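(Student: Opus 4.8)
The proof will run through the four parts in order, leaning on Lemma~\ref{L:lem3} (a compact set has closed $\pi_\rho$-saturation) and on the fact that a compact Hausdorff space is normal. The reader should think of parts (a), (b) and (d) as short deductions once part (c) is in hand.

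For (a), given a closed set $A\subseteq X$, compactness of $X$ makes $A$ compact, so $\pi_\rho^{-1}\pi_\rho(A)$ is closed by Lemma~\ref{L:lem3}; by the definition of the quotient topology this says precisely that $\pi_\rho(A)$ is closed, so $\pi_\rho$ is a closed map. For (b), that $V_\rho$ is $\pi_\rho$-saturated is immediate from its definition, since membership of $x$ in $V_\rho$ depends only on the $\rho$-class of $x$; and the complement of $V_\rho$ is exactly $\pi_\rho^{-1}\pi_\rho(X\setminus V)$, which is closed by Lemma~\ref{L:lem3} because $X\setminus V$ is compact, so $V_\rho$ is open.

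For (c), I would first replace $W$ by the open saturated set $W_\rho$ of part (b): since $A$ is saturated, $x\in A$ forces $\pi_\rho^{-1}\pi_\rho(x)\subseteq A\subseteq W$, hence $A\subseteq W_\rho\subseteq W$. Applying normality of $X$ to the closed set $A$ and the open set $W_\rho$ produces an open set $O$ with $A\subseteq O\subseteq\overline O\subseteq W_\rho$. Now put $U:=O_\rho$ (open and saturated by (b), and containing $A$ by the same argument as above, with $U\subseteq O$) and $K:=\pi_\rho^{-1}\pi_\rho(\overline O)$ (closed by Lemma~\ref{L:lem3}, saturated by construction). Then $U\subseteq O\subseteq\overline O\subseteq K$; and since $W_\rho$ is saturated and contains $\overline O$ we get $K=\pi_\rho^{-1}\pi_\rho(\overline O)\subseteq\pi_\rho^{-1}\pi_\rho(W_\rho)=W_\rho\subseteq W$. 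This gives $A\subseteq U\subseteq K\subseteq W$ with $U$ open saturated and $K$ closed saturated, as required.

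For (d), take distinct points $\pi_\rho(x)\neq\pi_\rho(y)$ of $X/\rho$. Then $A:=\pi_\rho^{-1}\pi_\rho(x)$ is closed and saturated, and $W:=X\setminus\pi_\rho^{-1}\pi_\rho(y)$ is open with $A\subseteq W$. Feeding these into (c) yields an open saturated $U$ and a closed saturated $K$ with $A\subseteq U\subseteq K\subseteq W$. Then $U$ and $X\setminus K$ are disjoint, both open, and both $\pi_\rho$-saturated, so $\pi_\rho(U)$ and $\pi_\rho(X\setminus K)$ are disjoint open subsets of $X/\rho$ (saturation is exactly what makes the $\pi_\rho$-images of open sets open), and they contain $\pi_\rho(x)$ and $\pi_\rho(y)$ respectively; hence $X/\rho$ is Hausdorff. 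I expect part (c) to be the delicate step: it is the only place where ordinary normality has to be interleaved with the saturation operator $V\mapsto V_\rho$ and with Lemma~\ref{L:lem3} applied to the closure of an auxiliary open set, and one must verify at each stage that the sets produced are genuinely $\pi_\rho$-saturated.
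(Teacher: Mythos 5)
Your proof is correct; the paper itself gives no argument for this lemma (it is quoted from Carruth, Hildebrant and Koch), and your route --- Lemma~\ref{L:lem3} for (a) and (b) via the saturation operator $V\mapsto V_\rho$, normality of the compact Hausdorff space interleaved with saturation for (c), and (c) applied to a pair of distinct classes for (d) --- is essentially the standard proof given there. The only point to make explicit is that in (d) the sets $\pi_\rho^{-1}\pi_\rho(x)$ and $\pi_\rho^{-1}\pi_\rho(y)$ are closed, which follows from Lemma~\ref{L:lem3} applied to the compact singletons $\{x\}$ and $\{y\}$.
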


As mentioned in the beginning of this section, we now consider the base $ S $ as a topological ternary semigroup, $\pi_\rho: S \rightarrow S/\rho$ as a quotient map and induce some algebraic and topological conditions to make the quotient space a topological ternary semigroup. First and most important result of this section is the following:

\begin{Th}
\label{A: theo} 
 Let $S$ be a topological ternary semigroup and $\rho$ be a closed congruence on $S$ such that $\pi_\rho \times \pi_\rho\times\pi_\rho: S\times S\times S\rightarrow (S/\rho)\times (S/\rho)\times(S/\rho)$ is a quotient map. Then the induced ternary multiplication on $S/\rho$  is continuous. Moreover, if $S$ and $S/\rho$ both are locally compact, then $S/\rho$ is a topological ternary semigroup under the induced ternary multiplication.
\end{Th}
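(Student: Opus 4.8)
The plan is to handle the two assertions separately: continuity of the induced multiplication is purely formal and follows from the commutativity of the diagram in Lemma~\ref{L:lem1}, whereas the Hausdorff property is where the quotient-map and local-compactness hypotheses actually get used.

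\emph{Step 1: continuity of the induced ternary multiplication.} Write $f$ and $g$ for the ternary multiplications on $S$ and $S/\rho$. Since $S$ is a topological ternary semigroup, $f$ is continuous, and $\pi_\rho$ is continuous because it is a quotient map; hence $\pi_\rho\circ f\colon S\times S\times S\to S/\rho$ is continuous. By Lemma~\ref{L:lem1}, $\pi_\rho\circ f = g\circ(\pi_\rho\times\pi_\rho\times\pi_\rho)$, so $g\circ(\pi_\rho\times\pi_\rho\times\pi_\rho)$ is continuous. As $\pi_\rho\times\pi_\rho\times\pi_\rho$ is assumed to be a quotient map, the universal property of quotient maps yields continuity of $g$. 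This settles the first of the two difficulties highlighted in the Introduction.

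\emph{Step 2: $S/\rho$ is Hausdorff.} It remains only to verify the Hausdorff axiom, i.e.\ that $\Delta(S/\rho)$ is closed in $(S/\rho)\times(S/\rho)$. By Lemma~\ref{L: lem2}, $\rho=(\pi_\rho\times\pi_\rho)^{-1}(\Delta(S/\rho))$, and $\rho$ is closed in $S\times S$ by hypothesis; hence it suffices to show that $\pi_\rho\times\pi_\rho\colon S\times S\to (S/\rho)\times(S/\rho)$ is a quotient map, for then a subset of $(S/\rho)\times(S/\rho)$ is closed precisely when its preimage in $S\times S$ is closed. To obtain this I would factor $\pi_\rho\times\pi_\rho = (\mathrm{id}_{S/\rho}\times\pi_\rho)\circ(\pi_\rho\times\mathrm{id}_S)$ and apply to each factor the classical theorem that the product of a quotient map with the identity map of a locally compact space is again a quotient map --- the first factor because $S$ is locally compact, the second because $S/\rho$ is locally compact. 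Since a composite of quotient maps is a quotient map, $\pi_\rho\times\pi_\rho$ is a quotient map, and combined with Step~1 this shows $S/\rho$ is a topological ternary semigroup under the induced multiplication.

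The point I would scrutinise most carefully is the second application of the ``product with a locally compact factor'' theorem, in which the locally compact factor is $S/\rho$ --- a space whose Hausdorffness is exactly what we are trying to establish. One must therefore invoke the version of that theorem that does not presuppose separation, namely the one valid for core-compact spaces (which includes spaces that are locally compact in the strong sense that every point has a neighbourhood base of compact sets). Alternatively, one can bypass local compactness for this step and deduce that $\pi_\rho\times\pi_\rho$ is a quotient map directly from the standing hypothesis that $\pi_\rho\times\pi_\rho\times\pi_\rho$ is one: if $(\pi_\rho\times\pi_\rho)^{-1}(A)$ is open in $S\times S$, then $(\pi_\rho\times\pi_\rho\times\pi_\rho)^{-1}\big(A\times(S/\rho)\big)=(\pi_\rho\times\pi_\rho)^{-1}(A)\times S$ is open in $S\times S\times S$, so $A\times(S/\rho)$ is open in $(S/\rho)^3$, and intersecting with the slice $(S/\rho)\times(S/\rho)\times\{t\}$ (any $t\in S/\rho$) shows $A$ is open in $(S/\rho)\times(S/\rho)$. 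Finally, I would not try to prove $\Delta(S/\rho)$ closed by a direct separation argument --- producing disjoint saturated open neighbourhoods of two distinct $\rho$-classes --- because $\pi_\rho$ need not be open and the $\rho$-classes need not be compact, so the Lemma~\ref{L:lem4}-type machinery that makes the compact case work is unavailable.
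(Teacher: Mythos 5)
Your proposal is correct and its main line is exactly the paper's argument: continuity of $g$ from the commuting diagram of Lemma~\ref{L:lem1} plus the hypothesis that $\pi_\rho\times\pi_\rho\times\pi_\rho$ is quotient, and Hausdorffness of $S/\rho$ from Lemma~\ref{L: lem2} and closedness of $\rho$, after factoring $\pi_\rho\times\pi_\rho=(\mathrm{id}_{S/\rho}\times\pi_\rho)\circ(\pi_\rho\times\mathrm{id}_S)$ and applying the product-with-a-locally-compact-factor theorem (the paper cites 4.1 and 4.3 of Chapter XII of \cite{dugunji}) together with the fact that a composite of quotient maps is quotient. Two of your side remarks deserve note. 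Your worry about the second application of that theorem, where the locally compact factor is $S/\rho$ whose Hausdorffness is precisely at stake, is legitimate: the paper's hypothesis only asserts local compactness of $S/\rho$, so one must use a formulation of the theorem that does not presuppose separation (e.g.\ local compactness in the sense of a neighbourhood base of compact sets, or core-compactness), exactly as you say. More interestingly, your alternative slicing argument --- pulling back $A\times(S/\rho)$ under $\pi_\rho\times\pi_\rho\times\pi_\rho$ to get $(\pi_\rho\times\pi_\rho)^{-1}(A)\times S$ and then restricting to a slice $(S/\rho)\times(S/\rho)\times\{t\}$ --- is correct and shows that $\pi_\rho\times\pi_\rho$ is automatically a quotient map once $\pi_\rho\times\pi_\rho\times\pi_\rho$ is. Combined with Lemma~\ref{L: lem2} this yields the Hausdorffness of $S/\rho$, and hence the full conclusion, without any local compactness assumption at all; so your route not only avoids the delicate point but mildly strengthens the theorem as stated, which the paper's proof does not do.
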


 \begin{proof}
 Let $f$ and $g$ be corresponding ternary multiplications on $S$ and $S/\rho$ respectively. From Theorem \ref{D:def}, $S/\rho$ is a  ternary semigroup. Also we have shown earlier in Lemma \ref{L:lem1}  that the following diagram commutes:
 $$
 \begin{tikzcd}[scale=20]
 	S\times S\times S \arrow{r}{f} \arrow{d}{\pi_\rho\times \pi_\rho\times \pi_\rho}& S \arrow{d}{\pi_\rho} \\
 	(S/\rho)\times (S/\rho)\times (S/\rho) \arrow{r}{g}& S/\rho
 \end{tikzcd}
 $$
 i,e $g\circ(\pi_\rho \times\pi_\rho\times \pi_\rho)= \pi_\rho\circ f$. Since $\pi_\rho$ is a quotient map and $f$ is continuous, so $\pi_\rho\circ f$ is continuous. Now let $U$ be any open set in $S/\rho$. Then $(\pi_\rho\circ f)^{-1}(U)= (g\circ(\pi_\rho \times\pi_\rho\times \pi_\rho))^{-1}(U)=((\pi_\rho \times\pi_\rho\times \pi_\rho)^{-1}) g^{-1}(U)$ is open (since $\pi_\rho\circ f$ is continuous). Now $\pi_\rho \times\pi_\rho\times \pi_\rho$ being a quotient map we can say that $g^{-1}(U)$ is open and hence $g$ is continuous (since $U$ is open).
  
 Since $S$ and $S/\rho$ both are locally compact and $\pi_\rho: S\rightarrow S/\rho$ is a quotient map, we have that $\pi_\rho \times i_{S}: S \times S \rightarrow (S/\rho)\times  S$ and $i_{(S/\rho)} \times \pi_\rho: (S/\rho)\times S\rightarrow (S/\rho)\times (S/\rho)$   are quotient maps, where $i_{S}$ and $i_{(S/\rho)}$ are  identity mappings on $S$ and $S/\rho$ respectively. Hence $(i_{(S/\rho)} \times \pi_\rho)\circ(\pi_\rho \times i_{S})=\pi_\rho \times \pi_\rho$  is a quotient map, as composition of two quotient maps is a quotient map (by 4.1 and 4.3 of XII 
 \cite{dugunji}). Again by Lemma \ref{L: lem2},$\rho= (\pi_\rho \times\pi_\rho)^{-1}(\Delta(S/\rho))$. Now it is given that $\rho$ is a closed congruence. Since $\pi_\rho \times \pi_\rho$ is a quotient map, we have that $\Delta(S/\rho)$ is closed. Therefore $S/\rho$ is a Hausdorff space. Hence $S/\rho$ is a topological ternary semigroup.
\end{proof}

\begin{Cor}
 Let $S$ be a compact topological ternary semigroup and $\rho$ be a closed congruence on $S$. Then $S/\rho$ is a compact topological ternary semigroup. \label{T: theo}\end{Cor}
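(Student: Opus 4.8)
The plan is to obtain this corollary as an immediate consequence of Theorem \ref{A: theo}. That theorem has two hypotheses beyond ``$S$ topological ternary semigroup, $\rho$ closed congruence'': that $\pi_\rho\times\pi_\rho\times\pi_\rho$ is a quotient map, and that both $S$ and $S/\rho$ are locally compact. So the whole task reduces to verifying these two facts in the compact setting.

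First I would record the easy observations. Since $\pi_\rho$ is continuous and surjective, $S/\rho=\pi_\rho(S)$ is the continuous image of the compact space $S$, hence compact; in particular both $S$ and $S/\rho$ are locally compact. Moreover, because $S$ is compact Hausdorff and $\rho$ is a closed equivalence on $S$, Lemma \ref{L:lem4}(d) gives that $S/\rho$ is Hausdorff. Consequently $S\times S\times S$ is compact (a finite product of compact spaces) while $(S/\rho)\times(S/\rho)\times(S/\rho)$ is Hausdorff (a finite product of Hausdorff spaces).

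Next comes the one substantive step: showing that $\pi_\rho\times\pi_\rho\times\pi_\rho$ is a quotient map. This map is a continuous surjection from the compact space $S\times S\times S$ onto the Hausdorff space $(S/\rho)\times(S/\rho)\times(S/\rho)$. Any continuous map from a compact space into a Hausdorff space is closed, and every closed continuous surjection is a quotient map; hence $\pi_\rho\times\pi_\rho\times\pi_\rho$ is a quotient map. With this in hand, Theorem \ref{A: theo} applies directly: the induced ternary multiplication on $S/\rho$ is continuous, and since $S$ and $S/\rho$ are both locally compact, $S/\rho$ is a topological ternary semigroup. As it is also compact, $S/\rho$ is a compact topological ternary semigroup, which is the assertion.

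The main obstacle — really the only non-routine point — is the verification that the triple product map is a quotient map, and it rests entirely on the principle ``a continuous map from a compact space to a Hausdorff space is closed, hence quotient.'' This is precisely why Lemma \ref{L:lem4}(d) is indispensable here: without Hausdorffness of $S/\rho$ the codomain $(S/\rho)^3$ need not be Hausdorff, the closedness argument fails, and one could no longer invoke Theorem \ref{A: theo}. Everything else in the argument is bookkeeping with elementary properties of compact and Hausdorff spaces.
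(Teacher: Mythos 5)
Your proof is correct, and its overall skeleton is the same as the paper's: establish that $\pi_\rho\times\pi_\rho\times\pi_\rho$ is a continuous closed surjection, hence a quotient map, and then invoke Theorem \ref{A: theo} together with the (local) compactness of $S$ and $S/\rho$. The difference lies in how the two sub-steps are justified. The paper gets Hausdorffness of $S/\rho$ by re-running the argument inside Theorem \ref{A: theo} (local compactness makes $\pi_\rho\times\pi_\rho$ a quotient map, and closedness of $\rho$ then closes the diagonal), and gets closedness of the triple map from Lemma \ref{L:lem4}(a) ($\pi_\rho$ is closed) together with the assertion that the product of closed maps is closed --- an assertion that is not a valid general principle and really needs the compactness of the domain to go through. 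You instead take Hausdorffness of $S/\rho$ directly from Lemma \ref{L:lem4}(d) and deduce closedness of $\pi_\rho\times\pi_\rho\times\pi_\rho$ from the standard fact that a continuous map from a compact space to a Hausdorff space is closed. Your route is slightly cleaner: it avoids the questionable product-of-closed-maps step and does not need to repeat the Whitehead-type quotient argument, at the cost of making Lemma \ref{L:lem4}(d) (rather than (a)) the essential input. Both arguments are legitimate, and both reduce the corollary to Theorem \ref{A: theo} in the same way.
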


\begin{proof}   Since $S$ is compact, it is locally compact. Now $\pi_\rho: S\rightarrow S/\rho$ is an onto  quotient map and hence continuous surjection. So $S/\rho$ is compact, as continuous image of a compact space is compact. Again proceeding in the same way as we have done in last theorem,  we can show that $S/\rho$ is a Hausdorff space (as compactness of $S$ and $S/\rho$ imply their local compactness). Again $\pi_\rho: S\rightarrow S/\rho$ is a closed map, by Lemma \ref{L:lem4} (a). We know that  Cartesian product of closed sets is a closed set. Therefore we have that $\pi_\rho\times \pi_\rho\times \pi_\rho$ is a closed map from $S\times S\times S $ to  $(S/\rho)\times (S/\rho) \times (S/\rho)$. Now continuity of $\pi_\rho$ implies that $\pi_\rho \times\pi_\rho\times \pi_\rho$ is also continuous. Therefore $\pi_\rho \times\pi_\rho\times \pi_\rho$ is a quotient map (since it is continuous and closed map). Hence by Theorem \ref{A: theo},ternary multiplication on $S/\rho$ is continuous.  Consequently, $S/\rho$ is a compact topological ternary semigroup.
\end{proof}

Above corollary is also valid if we replace closed congruence $\rho$ by $(I\times I)\cup\Delta(S)$, where $I$ is a closed ideal of $S$. This is basically Rees version of the same problem.

\begin{Cor}
Let $S$ be a compact topological ternary semigroup and $\rho_1= (I\times I)\cup\Delta(S)$, where $I$ is a closed ideal of $S$ and $\Delta(S)$ is the diagonal of $S$.  Then $S/\rho_1$ is a compact topological ternary semigroup. \label{C: cor2}
\end{Cor}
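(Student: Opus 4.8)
The plan is to reduce this immediately to Corollary~\ref{T: theo} by verifying that $\rho_1$ is a \emph{closed} congruence on $S$. First I would recall from Theorem~\ref{T:th4} that $\rho_1=(I\times I)\cup\Delta(S)$ is a congruence on $S$ whenever $I$ is an ideal; since $I$ is an ideal here, that hypothesis is in force, and the only thing that needs checking beyond Corollary~\ref{T: theo} is that $\rho_1$ is a closed subset of $S\times S$.

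For closedness I would argue as follows. Because $I$ is a closed subset of $S$, the product $I\times I$ is closed in $S\times S$ equipped with the product topology. Because $S$ is a topological ternary semigroup, its underlying topology is Hausdorff (see Definition~\ref{Def: ttsg}), and hence the diagonal $\Delta(S)$ is closed in $S\times S$. A finite union of closed sets is closed, so $\rho_1=(I\times I)\cup\Delta(S)$ is closed in $S\times S$; that is, $\rho_1$ is a closed congruence on $S$.

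It then suffices to apply Corollary~\ref{T: theo} to the compact topological ternary semigroup $S$ together with the closed congruence $\rho_1$, which directly gives that $S/\rho_1$ is a compact topological ternary semigroup. I do not anticipate any real obstacle; the only points requiring a moment's care are invoking the Hausdorffness built into the definition of a topological ternary semigroup to conclude that $\Delta(S)$ is closed, and using that the union in question involves only two closed sets so that closedness is preserved.
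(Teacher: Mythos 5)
Your proposal is correct and follows exactly the paper's own argument: cite Theorem~\ref{T:th4} for the congruence property, observe that $I\times I$ is closed (as $I$ is closed) and $\Delta(S)$ is closed (as $S$ is Hausdorff), so their union $\rho_1$ is a closed congruence, and then apply Corollary~\ref{T: theo}. No gaps.
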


\begin{proof}
 $\rho_1$ is a congruence on $S$ by Theorem \ref{T:th4}. Since $I$ is a closed ideal, we have $I\times I$ is closed. Also $\Delta(S)$ is closed since $S$ is a Hausdorff space. Therefore  $\rho_1$ is a closed congruence on $S$. Hence by Corollary \ref{T: theo}, $S/\rho_1$ is a compact topological ternary semigroup. 
\end{proof}

We now show that we can replace compact space by $k_w$-space but still having the same result.
\begin{Th}
\label{T: th3}Let $S$ be a topological ternary semigroup which is a $k_{w}$-space and $\rho$ be a closed congruence on $S$. Then $S/\rho$ is a topological ternary semigroup.\end{Th}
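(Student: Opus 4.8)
\medskip
\noindent\textbf{Proof proposal.} The plan is to run the same two-step scheme used in Theorem~\ref{A: theo} and Corollary~\ref{T: theo} --- first establish that $S/\rho$ is Hausdorff, then that the induced ternary multiplication $g$ on $S/\rho$ is continuous --- but now replacing (local) compactness by the $k_{w}$-structure. Write $S=\bigcup_{n}K_{n}$ for a defining family of compact sets; replacing $K_{n}$ by $K_{1}\cup\cdots\cup K_{n}$ we may assume $K_{1}\subseteq K_{2}\subseteq\cdots$. Let $f$ and $g$ denote the ternary multiplications on $S$ and $S/\rho$ (the latter from Theorem~\ref{D:def}).

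First I would collect some bookkeeping. (i) $S/\rho$ is a $k_{w}$-space with defining family $\{\pi_{\rho}(K_{n})\}$: each $\pi_{\rho}(K_{n})$ is compact, their union is $S/\rho$, and if $A\cap\pi_{\rho}(K_{n})$ is closed in $\pi_{\rho}(K_{n})$ for all $n$ then $\pi_{\rho}^{-1}(A)\cap K_{n}=(\pi_{\rho}|_{K_{n}})^{-1}(A\cap\pi_{\rho}(K_{n}))$ is closed in $K_{n}$ for all $n$, hence $\pi_{\rho}^{-1}(A)$ is closed in $S$, hence $A$ is closed because $\pi_{\rho}$ is a quotient map. (ii) For each $n$ the relation $\rho_{n}:=\rho\cap(K_{n}\times K_{n})$ is a closed equivalence relation on the compact Hausdorff space $K_{n}$, so by Lemma~\ref{L:lem4}(d) the quotient $\tilde K_{n}:=K_{n}/\rho_{n}$ is compact Hausdorff; its quotient map has the same fibres as $\pi_{\rho}|_{K_{n}}$, so it induces a continuous bijection $\tilde K_{n}\to\pi_{\rho}(K_{n})$. (iii) For $n\le m$ the composite $K_{n}\hookrightarrow K_{m}\to\tilde K_{m}$ is a continuous surjection of a compact space onto a subspace of the Hausdorff space $\tilde K_{m}$, hence closed, hence a quotient map, and its fibres are exactly the $\rho_{n}$-classes; so it induces a homeomorphism of $\tilde K_{n}$ onto its image, and that image is closed in $\tilde K_{m}$. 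Thus $\tilde K_{1}\subseteq\tilde K_{2}\subseteq\cdots$ is an increasing chain of compact Hausdorff spaces whose bonding maps are closed embeddings, and, arguing as in (i) (now using that $K_{n}\to\tilde K_{n}$ is a quotient map), $S/\rho$ is, as a set, $\bigcup_{n}\tilde K_{n}$ carrying the weak topology of this chain: $A\subseteq S/\rho$ is closed iff $A\cap\tilde K_{n}$ is closed in $\tilde K_{n}$ for every $n$.

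The \textbf{main obstacle} is the Hausdorffness of $S/\rho$: Lemma~\ref{L:lem4}(d) is not directly available, and the shortcut of Theorem~\ref{A: theo} --- deducing $\Delta(S/\rho)$ closed from $\rho=(\pi_{\rho}\times\pi_{\rho})^{-1}(\Delta(S/\rho))$ --- requires $\pi_{\rho}\times\pi_{\rho}$ to be a quotient map, which one cannot obtain for free here (it is essentially equivalent to knowing that the subspaces $\pi_{\rho}(K_{n})$ are already Hausdorff). The way around is to use the colimit description above together with the standard fact that the weak union of an increasing chain of compact Hausdorff spaces along closed embeddings is Hausdorff: given distinct $p,q\in S/\rho$, pick $N$ with $p,q\in\tilde K_{N}$ and, using normality of $\tilde K_{N}$, choose open $U_{N}\ni p$, $V_{N}\ni q$ in $\tilde K_{N}$ with $\overline{U_{N}}\cap\overline{V_{N}}=\emptyset$; since $\tilde K_{n}$ is closed in $\tilde K_{n+1}$, the sets $\overline{U_{n}},\overline{V_{n}}$ are disjoint closed subsets of $\tilde K_{n+1}$, so normality of $\tilde K_{n+1}$ yields open $U_{n+1}\supseteq\overline{U_{n}}$, $V_{n+1}\supseteq\overline{V_{n}}$ in $\tilde K_{n+1}$ with disjoint closures; then $U:=\bigcup_{n\ge N}U_{n}$ and $V:=\bigcup_{n\ge N}V_{n}$ are disjoint and, by the weak-topology criterion, open. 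Hence $S/\rho$ is Hausdorff.

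Once this is in hand the rest is routine. Each $\pi_{\rho}(K_{n})$ is now a compact Hausdorff subspace of $S/\rho$, so $\pi_{\rho}|_{K_{n}}\colon K_{n}\to\pi_{\rho}(K_{n})$ is closed, hence a quotient map, and so is $(\pi_{\rho}\times\pi_{\rho}\times\pi_{\rho})|_{K_{n}^{3}}\colon K_{n}^{3}\to\pi_{\rho}(K_{n})^{3}$ (a continuous surjection of the compact space $K_{n}^{3}$ onto the Hausdorff space $\pi_{\rho}(K_{n})^{3}$). By Lemma~\ref{L:lem1}, $g\circ(\pi_{\rho}\times\pi_{\rho}\times\pi_{\rho})|_{K_{n}^{3}}=\pi_{\rho}\circ f|_{K_{n}^{3}}$ is continuous, so $g|_{\pi_{\rho}(K_{n})^{3}}$ is continuous. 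Since finite products of $k_{w}$-spaces are $k_{w}$-spaces, $(S/\rho)^{3}$ is a $k_{w}$-space with defining family $\{\pi_{\rho}(K_{n})^{3}\}$, and a map out of a $k_{w}$-space is continuous as soon as its restriction to each member of the family is; hence $g$ is continuous. Together with Hausdorffness, this shows $S/\rho$ is a topological ternary semigroup.
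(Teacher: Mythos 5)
Your argument is correct, but it takes a genuinely different route from the paper. The paper's proof is a reduction to cited machinery: it invokes Morita's theorem \cite{Morita} to realize $S$ (and later $S/\rho$) as a quotient of a locally compact $\sigma$-compact Hausdorff space, gets Hausdorffness of $S/\rho$ from Bourbaki \cite{Bourbaki}, uses Milnor \cite{Milnor} for finite productivity of Hausdorff $k_w$-spaces, and then applies Michael's Theorem 1.5 \cite{Michael} to conclude that $\pi_\rho\times\pi_\rho\times\pi_\rho$ is a quotient map, so that Theorem \ref{A: theo} yields continuity of the induced multiplication. You instead work intrinsically with an increasing defining family $\{K_n\}$: you restrict $\rho$ to each $K_n$, use Lemma \ref{L:lem4}(d) to form the compact Hausdorff quotients $\tilde K_n=K_n/\rho_n$, show the bonding maps are closed embeddings, identify $S/\rho$ with the weak union of this chain, prove Hausdorffness by the standard expanding-open-sets normality argument, and then verify continuity of $g$ piecewise on the sets $\pi_\rho(K_n)^3$ (via Lemma \ref{L:lem1} and the fact that a map on a $k_w$-space is continuous iff it is continuous on each member of a defining family), bypassing the quotient-map hypothesis of Theorem \ref{A: theo} and the results of \cite{Morita}, \cite{Bourbaki} and \cite{Michael} altogether. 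What each buys: the paper's proof is short modulo the literature it cites, whereas yours is essentially self-contained, relying only on Lemma \ref{L:lem4} and the Milnor-type product statement (note you do need the precise form of \cite{Milnor}, namely that $\{\pi_\rho(K_n)^3\}$ is a \emph{defining} family for $(S/\rho)^3$, not merely that the cube is $k_w$), and it yields as a by-product the explicit description of $S/\rho$ as a $k_w$-space with defining family $\{\pi_\rho(K_n)\}$, which the paper obtains from Morita. One cosmetic point: if one's definition of $k_w$-space includes Hausdorffness, your bookkeeping item (i) should formally be deferred until after Step two (Hausdorffness), as you in effect do when you use it.
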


\begin{proof} Since $S$ is a topological ternary semigroup which is a $k_{w}$-space, $S$ is a Hausdorff $k_{w}$-space. As cartesian product of Hausdorff spaces is Hausdorff, $S\times S$ and $S\times S\times S$  are Hausdorff spaces. Since $S$  is $k_{w}$-space and $k_{w}$-spaces are finitely productive (Milnor in \cite{Milnor} proved it for two spaces and applying this several times finitely productive property can be proved), we have that $S\times S$ and $S\times S \times S$ are Hausdorff $k_{w}$-spaces.  In \cite{Morita}, Morita proved that any $k_{w}$-space is the quotient of a $\sigma$-compact locally compact Hausdorff space. Hence there exists a $\sigma$ compact locally compact Hausdorff space $S'$ such that $S$ is the quotient of $S'$. Now $\pi_\rho: S\rightarrow S/\rho$ is a quotient mapping. Since $S$ is a quotient space of $S'$, there exists a quotient map $p:S'\rightarrow S$. Now define $m: S'\rightarrow S/\rho$ such that the following diagram commutes:
	
	$$
	\begin{tikzcd}
		S' \arrow{r}{p} \arrow{dr}{m}  & S \arrow{d}{\pi_\rho} \\
		& S/\rho
	\end{tikzcd}
	$$
 Therefore from this diagram, $m=\pi_\rho\circ p$. Then $m$ is a quotient map as it is a composition of two quotient maps $\pi_\rho$ and $p$. Hence $S/\rho$ is a quotient space of $S'$. Now by Bourbaki, N. General Topology \cite{Bourbaki}, $S/\rho$ is a Hausdorff space. Again by Morita \cite{Morita} $S/\rho$ is a $k_{w}$-space. Since Hausdorff $k_{w}$-spaces are finitely productive,  the space $(S/\rho )\times (S/\rho) \times (S/\rho)$ is a Hausdorff $k_{w}$-space. Also $S$ is a Hausdorff $k_{w}$-space.
 
 We now show that the map $\pi_\rho\times \pi_\rho\times \pi_\rho: S\times S \times S\rightarrow (S/\rho)\times (S/\rho)\times (S/\rho)$ is a quotient map. In fact, $\pi_\rho: S\rightarrow S/\rho$ being a quotient map  we have that $\pi_\rho \times \pi_\rho:S\times S \rightarrow (S/\rho)\times (S/\rho)$ is a quotient map, as $S$ and $(S/\rho)\times (S/\rho)$ both are Hausdorff $k_{w}$-spaces (by Theorem 1.5 of \cite{Michael}). Now using the same argument on $\pi_\rho$ and $\pi_\rho\times \pi_\rho $, we have that $\pi_\rho \times \pi_\rho \times \pi_\rho$ is a quotient map, as $S$ and $(S/\rho)\times (S/\rho)\times (S/\rho)$  are Hausdorff $k_{w}$-spaces. Already we have proved in Theorem \ref{A: theo}  that induced ternary multiplication on $S/\rho$ is continuous when $\pi_\rho\times \pi_\rho\times \pi_\rho$ is quotient. Therefore $S/\rho$ is a topological ternary semigroup.
\end{proof}
\begin{Rem}
	It is to be noted that Corollary \ref{T: theo} and Corollary \ref{C: cor2}  which we have proved earlier are direct consequences of  the Theorem \ref{T: th3}, since every compact topological space is a $k_{w}$-space.
\end{Rem}
\begin{Cor}
	Let $S$ be a locally compact $\sigma$-compact topological ternary semigroup and  $\rho$ be a closed congruence on $S$. Then $S/\rho$ is a topological ternary semigroup. \label{ch6:t:locally comp1}
\end{Cor}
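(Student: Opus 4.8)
The plan is to derive this as an immediate corollary of Theorem \ref{T: th3}. The only point that needs attention is the implication ``locally compact and $\sigma$-compact $\Rightarrow$ $k_w$-space'', and this was in fact already recorded in Section 2 right after the definition of $k_w$-space, so strictly speaking nothing new is required. For completeness I would spell out the standard argument: if $S=\bigcup_{n\in\N}C_n$ with each $C_n$ compact, then using local compactness one inductively constructs compact sets $K_n$ with $C_n\cup K_{n-1}\subseteq \mathrm{int}(K_n)$; hence $S=\bigcup_n K_n$ and, since the $\mathrm{int}(K_n)$ form an open cover, every compact subset of $S$ is contained in some $K_n$. It follows that a subset $A\subseteq S$ is closed precisely when $A\cap K_n$ is closed in $K_n$ for every $n$, i.e. $S$ is a $k_w$-space with defining sequence $\{K_n\}$.

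With that in hand the remaining steps are pure bookkeeping. Since $S$ is a topological ternary semigroup it carries a Hausdorff topology by Definition \ref{Def: ttsg}, so $S$ is a Hausdorff $k_w$-space; and $\rho$ is a closed congruence on $S$ by hypothesis. Thus $S$ and $\rho$ satisfy exactly the hypotheses of Theorem \ref{T: th3}, and that theorem gives at once that $S/\rho$ is a topological ternary semigroup under the induced ternary multiplication.

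I do not expect any genuine obstacle here: the entire content is the passage from ``locally compact $\sigma$-compact'' to ``$k_w$-space'', which is already granted in the preliminaries. The only place where a little care is needed is to check that the exhaustion $\{K_n\}$ one writes down really does determine the topology weakly, and this is precisely where local compactness (rather than mere $\sigma$-compactness) enters, through the nesting $K_n\subseteq \mathrm{int}(K_{n+1})$.
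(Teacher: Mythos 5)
Your proposal is correct and follows essentially the same route as the paper: both reduce the corollary to Theorem \ref{T: th3} by observing that a Hausdorff locally compact $\sigma$-compact space is a $k_w$-space (a fact the paper simply records in the prerequisites, while you spell out the standard exhaustion argument with $K_n\subseteq\mathrm{int}(K_{n+1})$). No gap; the extra detail you supply is just a fuller justification of the same step.
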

\begin{proof} We know that every Hausdorff locally compact $\sigma$-compact topological space is a $k_{w}$-space. Since $S$ is a topological ternary semigroup so it is Hausorff space according to  definition \ref{Def: ttsg}. Hence by Theorem \ref{T: th3}, $S$ is a topological ternary semigroup. 
\end{proof}

\begin{Cor}
	Let $S$ be a locally compact $\sigma$-compact topological ternary semigroup and let $I$ be a closed ideal of $S$. Then $S/(I\times I)\cup \Delta(S)$ is a topological ternary semigroup.
\end{Cor}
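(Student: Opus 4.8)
The plan is to reduce this statement to the preceding Corollary \ref{ch6:t:locally comp1} by verifying that $\rho_1 := (I\times I)\cup\Delta(S)$ is a \emph{closed} congruence on $S$. First I would invoke Theorem \ref{T:th4}, which already guarantees that $\rho_1$ is a congruence on the ternary semigroup $S$ whenever $I$ is an ideal of $S$; no topological hypothesis enters at this step.

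Next I would check closedness of $\rho_1$ as a subset of $S\times S$. Since $S$ is a topological ternary semigroup, it is Hausdorff by Definition \ref{Def: ttsg}, hence the diagonal $\Delta(S)$ is closed in $S\times S$. By hypothesis $I$ is a closed subset of $S$, so $I\times I$ is closed in the product topology on $S\times S$. A finite union of closed sets being closed, it follows that $\rho_1 = (I\times I)\cup\Delta(S)$ is closed in $S\times S$; that is, $\rho_1$ is a closed congruence on $S$.

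Finally, since $S$ is locally compact and $\sigma$-compact, Corollary \ref{ch6:t:locally comp1} applies with $\rho = \rho_1$, yielding that $S/\rho_1 = S/((I\times I)\cup\Delta(S))$ is a topological ternary semigroup under the quotient topology and the induced ternary multiplication. I do not anticipate any genuine obstacle here: the substantive content is carried entirely by Theorem \ref{T: th3} (via Corollary \ref{ch6:t:locally comp1}) together with Theorem \ref{T:th4}, and the only fresh verification — closedness of $\rho_1$ — is immediate from the Hausdorffness of $S$ and the assumption that $I$ is closed. One could equally well bypass Corollary \ref{ch6:t:locally comp1} and appeal to Theorem \ref{T: th3} directly, using the standard fact that every locally compact $\sigma$-compact Hausdorff space is a $k_w$-space.
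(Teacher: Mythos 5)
Your proposal is correct and follows the paper's own route: verify that $(I\times I)\cup\Delta(S)$ is a closed congruence (congruence by Theorem \ref{T:th4}, closed because $I\times I$ is closed and $\Delta(S)$ is closed by Hausdorffness) and then apply Corollary \ref{ch6:t:locally comp1}. The paper's proof is just a terser version of exactly this argument.
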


\begin{proof} We find that $(I\times I)\cup \Delta(S)$ is a closed congruence on $S$. Therefore by  Corollary \ref{ch6:t:locally comp1}, $S/(I\times I)\cup \Delta(S)$ is a topological ternary semigroup.
\end{proof}
We now provide an alternative proof of the Corollary \ref{ch6:t:locally comp1} independently that is with out  help  of the Theorem \ref{T: th3}.  To proceed we need the following lemmas.

\begin{Lem} \cite {Carruth}\label {L:lem5}  Let $X$ be a locally compact $\sigma$-compact space. Then there exists a sequence $\{K_{n}\}$ of compact subsets of $X$  such that $X=\cup\{K_{n}\}_{n\in \N}$ and $K_{n}\subset K_{n+1}^{o}$ (the interior of $K_{n+1}$ in $X$) for each $n\in \N$. \end{Lem}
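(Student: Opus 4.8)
The plan is a standard compact-exhaustion argument: starting from an arbitrary countable compact cover of $X$, one inductively enlarges its members to compact sets each of which sits inside the interior of its successor.

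First I would use $\sigma$-compactness to fix compact sets $C_n$ with $X=\bigcup_{n\in\N}C_n$, and, after replacing $C_n$ by $C_1\cup\cdots\cup C_n$, assume $C_1\subseteq C_2\subseteq\cdots$. The one place where local compactness genuinely enters is the following sub-claim, which I would establish first: \emph{every compact subset $C$ of $X$ is contained in an open set whose closure in $X$ is compact.} Indeed, by local compactness each $x\in C$ has an open neighbourhood $W_x$ with $\overline{W_x}$ compact; compactness of $C$ yields a finite subcover $W_{x_1},\dots,W_{x_m}$, and then $U:=W_{x_1}\cup\cdots\cup W_{x_m}$ is open, contains $C$, and satisfies $\overline{U}\subseteq\overline{W_{x_1}}\cup\cdots\cup\overline{W_{x_m}}$, so $\overline{U}$ is a closed subset of a compact set and hence compact.

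With this in hand I would build $\{K_n\}$ recursively. Let $K_1:=\overline{V_1}$, where $V_1$ is an open set containing $C_1$ with $\overline{V_1}$ compact (the sub-claim applied to $C_1$). Given a compact set $K_n$, apply the sub-claim to the compact set $K_n\cup C_{n+1}$ to obtain an open $V$ with $K_n\cup C_{n+1}\subseteq V$ and $\overline{V}$ compact, and set $K_{n+1}:=\overline{V}$. Then each $K_n$ is compact; since $V$ is open and $V\subseteq\overline{V}=K_{n+1}$, the maximality of the interior gives $K_n\subseteq V\subseteq K_{n+1}^{o}$; and $C_{n+1}\subseteq K_{n+1}$. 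Consequently $X=\bigcup_{n}C_n\subseteq\bigcup_{n}K_n\subseteq X$, so $X=\bigcup_{n\in\N}K_n$, and $K_n\subseteq K_{n+1}^{o}$ for every $n$, as required.

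I do not expect a serious obstacle: the argument is pure point-set topology. The only points needing care are keeping the closures compact at every stage — which is exactly what the sub-claim guarantees — and arranging the containment in interiors, which is automatic once $K_{n+1}$ is taken to be the closure of an open superset of $K_n$. (If $X$ is itself compact the recursion degenerates harmlessly, e.g. one may take $K_n=X$ for all $n$.)
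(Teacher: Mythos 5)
Your argument is correct: the sub-claim (every compact set lies in an open set with compact closure, valid here since all spaces in this paper are Hausdorff and locally compact) plus the recursive enlargement $K_{n+1}:=\overline{V}$ with $K_n\cup C_{n+1}\subseteq V$ gives exactly the required exhaustion. The paper itself offers no proof — it cites Carruth, Hildebrant and Koch — and your construction is essentially the standard argument given there, so there is nothing further to reconcile.
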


\begin{Lem} \label {L:lem6}  Let $X,\,\ Y,\,\ Z$ and $W$ be topological spaces and $A,B,C$ be compact subsets of $X,Y,Z$ respectively. Also assume that $f: X\times Y \times Z\rightarrow W$ be a continuous function and  $D$ be an open subset of $W$ containing $f(A\times B\times C)$. Then there exist open sets $P, Q$ and $ R$ in $X,\,\ Y $ and $Z$ respectively such that $A\subseteq P,\,\ B\subseteq Q,\,\ C\subseteq R$ and $f(P\times Q\times R)\subseteq D$.\end{Lem}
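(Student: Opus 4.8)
The plan is to exploit compactness of each of $A$, $B$, $C$ together with continuity of $f$ via a standard tube-lemma-type argument, carried out in three successive stages, one coordinate at a time. First I would fix a point $(a,b,c)\in A\times B\times C$. Since $f(a,b,c)\in D$ and $f$ is continuous, there is a basic open box $P_{(a,b,c)}\times Q_{(a,b,c)}\times R_{(a,b,c)}$ containing $(a,b,c)$ whose $f$-image lies in $D$. The family of these boxes, as $(a,b,c)$ ranges over $A\times B\times C$, is an open cover of the compact set $A\times B\times C$ (compact since $A$, $B$, $C$ are compact and a finite product of compact spaces is compact), so finitely many of them suffice; but that finite subcover does not immediately give a \emph{single} product neighbourhood, which is the point of the lemma.

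To upgrade the finite subcover to a single box, I would argue coordinate by coordinate. Fix $b\in B$ and $c\in C$. For each $a\in A$, continuity gives open sets $a\in P_a\subseteq X$, $b\in Q_{a}^{b,c}\subseteq Y$, $c\in R_{a}^{b,c}\subseteq Z$ with $f(P_a\times Q_a^{b,c}\times R_a^{b,c})\subseteq D$. The sets $\{P_a: a\in A\}$ cover $A$; take a finite subcover $P_{a_1},\dots,P_{a_n}$, set $P^{b,c}:=\bigcup_{i=1}^n P_{a_i}\supseteq A$, $Q^{b,c}:=\bigcap_{i=1}^n Q_{a_i}^{b,c}$ (an open neighbourhood of $b$), and $R^{b,c}:=\bigcap_{i=1}^n R_{a_i}^{b,c}$ (an open neighbourhood of $c$); then $f(P^{b,c}\times Q^{b,c}\times R^{b,c})\subseteq D$ because any point of $P^{b,c}$ lies in some $P_{a_i}$ while the other two coordinates lie in the corresponding $Q_{a_i}^{b,c}$, $R_{a_i}^{b,c}$. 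Now repeat the same device with $b$ varying over the compact set $B$: cover $B$ by finitely many $Q^{b_1,c},\dots,Q^{b_m,c}$, take $P^c:=\bigcap_j P^{b_j,c}\supseteq A$, $Q^c:=\bigcup_j Q^{b_j,c}\supseteq B$, $R^c:=\bigcap_j R^{b_j,c}$, to obtain $f(P^c\times Q^c\times R^c)\subseteq D$. Finally run the argument once more with $c$ varying over the compact set $C$: cover $C$ by finitely many $R^{c_1},\dots,R^{c_k}$, and put $P:=\bigcap_\ell P^{c_\ell}$, $Q:=\bigcap_\ell Q^{c_\ell}$, $R:=\bigcup_\ell R^{c_\ell}$. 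Then $P,Q,R$ are open, $A\subseteq P$, $B\subseteq Q$, $C\subseteq R$, and $f(P\times Q\times R)\subseteq D$, as required.

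The only real subtlety — and the step I expect to be the main obstacle to write cleanly — is the bookkeeping in the iterated tube argument: one must be careful that at each stage the coordinate being fixed is shrunk by a \emph{finite} intersection (so it stays open and still contains the relevant compact set or point), while the coordinate being swept is enlarged by a \emph{finite} union (so it still covers the compact set), and that the inclusion $f(P\times Q\times R)\subseteq D$ is preserved because every point of the enlarged coordinate lands in one of the finitely many pieces whose companion neighbourhoods were intersected. Once the two-variable tube lemma is established, the three-variable version is just one more application of the same pattern, so an efficient write-up could first prove the statement for a function of two variables and then bootstrap to three by regarding $X\times Y$ as a single space with compact subset $A\times B$. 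No new ideas beyond compactness and continuity are needed.
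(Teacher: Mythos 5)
Your proposal is correct and is essentially the same iterated tube-lemma argument as the paper's own proof, which also works coordinate by coordinate (shrinking the fixed coordinates by finite intersections and enlarging the swept coordinate by a finite union), differing only in the order in which $A$, $B$, $C$ are swept. No gap; nothing further to add.
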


\begin{proof}  Since $f$ is continuous, $f^{-1}(D)$ is an open subset of $X\times Y \times Z$ containing $A\times B\times C$. Then for every $(a,b,c)\in A\times B\times C$, there exist $L, M$ and $N$ open in $X, Y$ and $Z$ respectively such that $(a,b,c)\in L\times M\times N \subseteq f^{-1}(D)$. Since $B$ is compact, for  fixed $a\in A$ and $c\in C$ there exist open sets $L_{1}, L_{2}, L_{3},......,L_{k}$ containing $a$; $ N_{1},N_{2},.....,N_{k}$ containing $c$ and corresponding open sets $M_{1}, M_{2},.....,M_{k}$ in $Y$ such that $B\subseteq F:=\displaystyle\bigcup_{i=1}^{k} {M_{i}}$. Let $ E:=\displaystyle\bigcap_{i=1}^{k} L_{i}$, $G:= \displaystyle\bigcap_{i=1}^{k} N_{i}$. Then $E, F$ and $G$ are open in $X, Y$ and $Z$ respectively. Also $a\in E$, $B\subseteq F$, $c\in G$ and $E\times F\times G\subseteq f^{-1}(D)$. 
	
Now $A$ being compact, for each fixed $ c\in C $ there exist open sets $E_{1},E_{2},....,E_{m}$ in $X$; $F_{1},F_{2},......,F_{m}$ in $Y$ and $G_{1},G_{2},......,G_{m}$ in $Z$ such that	$A\subseteq H:=\displaystyle\bigcup_{i=1}^{m}E_{i}$,$B\subseteq I:=\displaystyle\bigcap_{i=1}^{m} F_{i}$, $c\in J:=\displaystyle\bigcap_{i=1}^{m}G_{i}$. Clearly, $ H,I,J $ are open in $ X,Y,Z $ respectively such that $H\times I\times J\subseteq f^{-1}(D)$. 
	
Similarly $C$ being compact, there exist open sets $H_{1},H_{2},....,H_{n}$ in $X$; $I_{1},I_{2},......,I_{n}$ in $Y$ and $J_{1},J_{2},......,J_{n}$ in $Z$ such that $A\subseteq P:=\displaystyle\bigcap_{i=1}^{m}H_{i}$, $B\subseteq Q:=\displaystyle\bigcap_{i=1}^{m} I_{i}$, $C\subseteq R:=\displaystyle\bigcup_{i=1}^{m}J_{i}$. Then $ P,Q,R $ are open in $ X,Y,Z $ respectively such that $P\times Q\times R\subseteq f^{-1}(D)$ $ \implies f(P\times Q\times R)\subseteq ff^{-1}(D)\subseteq D $.
\end{proof}

 So we are in a position to prove the Corollary \ref{ch6:t:locally comp1} which is given below as a theorem.

\begin{Th}
Let $S$ be a locally compact $\sigma$-compact topological ternary semigroup and  $\rho$ be a closed congruence on $S$. Then $S/\rho$ is a topological ternary semigroup. \label{ch6:t:locally comp}\end{Th}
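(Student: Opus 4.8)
The plan is to establish the two requirements of Definition \ref{Def: ttsg} for $S/\rho$: that the induced ternary multiplication $g$, which exists and makes $S/\rho$ a ternary semigroup by Theorem \ref{D:def}, is continuous, and that $S/\rho$ is Hausdorff. Both will be deduced from one inductive construction of $\rho$-saturated open neighbourhoods. For continuity, let $f$ be the multiplication on $S$, fix an open $\mathcal{O}\subseteq S/\rho$ and a point $(a\rho,b\rho,c\rho)\in g^{-1}(\mathcal{O})$, and put $W=\pi_\rho^{-1}(\mathcal{O})$; this is a $\rho$-saturated open subset of $S$ containing $abc$. Writing $A,B,C$ for the $\rho$-classes of $a,b,c$, Theorem \ref{T:th1} together with saturation of $W$ gives $f(A\times B\times C)\subseteq W$, so it suffices to produce $\rho$-saturated open sets $V_{1}\supseteq A$, $V_{2}\supseteq B$, $V_{3}\supseteq C$ with $f(V_{1}\times V_{2}\times V_{3})\subseteq W$. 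Indeed $\pi_\rho$ carries $\rho$-saturated open sets to open sets, so $\pi_\rho(V_{1})\times\pi_\rho(V_{2})\times\pi_\rho(V_{3})$ is then an open box about $(a\rho,b\rho,c\rho)$ whose $g$-image lies in $\pi_\rho(W)=\mathcal{O}$, whence $g^{-1}(\mathcal{O})$ is open.

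To build the $V_{i}$, use Lemma \ref{L:lem5} to write $S=\bigcup_{n}K_{n}$ with each $K_{n}$ compact and $K_{n}\subseteq K_{n+1}^{o}$; after discarding finitely many terms we may assume $a,b,c\in K_{1}$. The construction proceeds by induction on $n$, producing open sets $P_{n},Q_{n},R_{n}$ with compact closures such that $A\cap K_{n}\subseteq P_{n}$, $\overline{P_{n}}\subseteq P_{n+1}$, $\pi_\rho^{-1}\pi_\rho(\overline{P_{n}})\cap K_{n}\subseteq P_{n+1}$, and $f(\overline{P_{n}}\times\overline{Q_{n}}\times\overline{R_{n}})\subseteq W$, with the symmetric conditions for $Q_{n},R_{n}$. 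At the inductive step I would let $\mathcal{A}$ be the union of $A\cap K_{n+1}$, of $\pi_\rho^{-1}\pi_\rho(\overline{P_{n}})\cap K_{n}$, and of $\overline{P_{n}}$; this is compact because $\pi_\rho^{-1}\pi_\rho(\overline{P_{n}})$ is closed by Lemma \ref{L:lem3}, applied to the compact set $\overline{P_{n}}$. Defining $\mathcal{B},\mathcal{C}$ analogously, one checks $f(\mathcal{A}\times\mathcal{B}\times\mathcal{C})\subseteq W$: since $A\cap K_{n}\subseteq P_{n}\subseteq\overline{P_{n}}$, the class $A$ meets $\overline{P_{n}}$, so every point of $\mathcal{A}$ is $\rho$-related to a point of $\overline{P_{n}}$, likewise for $\mathcal{B},\mathcal{C}$, hence each triple in $\mathcal{A}\times\mathcal{B}\times\mathcal{C}$ is coordinatewise $\rho$-related to a triple in $\overline{P_{n}}\times\overline{Q_{n}}\times\overline{R_{n}}$ and its product lies in $W$ by Theorem \ref{T:th1} and saturation of $W$. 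Now Lemma \ref{L:lem6} yields open sets $P^{*}\supseteq\mathcal{A}$, $Q^{*}\supseteq\mathcal{B}$, $R^{*}\supseteq\mathcal{C}$ with $f(P^{*}\times Q^{*}\times R^{*})\subseteq W$, and local compactness of $S$ lets us choose $P_{n+1},Q_{n+1},R_{n+1}$ with compact closures sandwiched inside $P^{*},Q^{*},R^{*}$ and still containing $\mathcal{A},\mathcal{B},\mathcal{C}$. Finally $V_{1}=\bigcup_{n}P_{n}$, $V_{2}=\bigcup_{n}Q_{n}$, $V_{3}=\bigcup_{n}R_{n}$ are open, contain $A,B,C$, satisfy $f(V_{1}\times V_{2}\times V_{3})\subseteq W$ by the monotonicity $\overline{P_{n}}\subseteq P_{n+1}$, and are $\rho$-saturated: if $x\in P_{n}$ and $y\,\rho\,x$, pick $m\geq n$ with $y\in K_{m}$; then $x\in\overline{P_{m}}$, so $y\in\pi_\rho^{-1}\pi_\rho(\overline{P_{m}})\cap K_{m}\subseteq P_{m+1}\subseteq V_{1}$. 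For the Hausdorff property, given $a\rho\neq b\rho$ I would run the two-variable analogue of this construction (Lemma \ref{L:lem6} covers it with one factor a single point and $f$ a projection), with $W$ replaced by the set $(S\times S)\setminus\rho$ — open since $\rho$ is closed, saturated for $\pi_\rho\times\pi_\rho$, and containing $A\times B$ since $\rho$ is a congruence — obtaining $\rho$-saturated open $V_{1}\supseteq A$, $V_{2}\supseteq B$ with $(V_{1}\times V_{2})\cap\rho=\emptyset$; then $\pi_\rho(V_{1})\ni a\rho$ and $\pi_\rho(V_{2})\ni b\rho$ are disjoint open sets.

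I expect the inductive construction to be the main obstacle, the crux being the choice of $\mathcal{A}$ at each stage so that three things hold simultaneously: its $\pi_\rho$-saturation, cut down by $K_{n}$, is again compact (this forces Lemma \ref{L:lem3}); the product $f(\mathcal{A}\times\mathcal{B}\times\mathcal{C})$ stays inside $W$ although it mixes the previous enlargement $\overline{P_{n}}$ with the genuinely new piece $A\cap K_{n+1}$, the resulting ``cross terms'' being absorbed only because $W$ is saturated and $\rho$ is a congruence; and the carried-over set $\pi_\rho^{-1}\pi_\rho(\overline{P_{n}})\cap K_{n}$ is pushed into $P_{n+1}$, which is what makes the limit $V_{1}$ $\rho$-saturated and is where both $\bigcup_{n}K_{n}=S$ and $K_{n}\subseteq K_{n+1}^{o}$ enter. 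As an alternative bookkeeping, the same construction (with the identity map) shows that $\pi_\rho\times\pi_\rho$ and $\pi_\rho\times\pi_\rho\times\pi_\rho$ are quotient maps; continuity of $g$ then follows from Theorem \ref{A: theo}, and Hausdorffness of $S/\rho$ from Lemma \ref{L: lem2}, since $\rho=(\pi_\rho\times\pi_\rho)^{-1}(\Delta(S/\rho))$ is closed.
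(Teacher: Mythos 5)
Your argument is correct, and it runs on the same machinery as the paper's own proof --- the compact exhaustion of Lemma \ref{L:lem5}, the three-factor tube lemma (Lemma \ref{L:lem6}), closedness of saturations of compact sets (Lemma \ref{L:lem3}), and a towered inductive construction over the $K_n$ --- but you deploy that machinery differently. The paper uses the construction to show that $\pi_\rho\times\pi_\rho\times\pi_\rho$ is a quotient map (its towers are sets closed in $K_n$ and $\pi_\rho|_{K_n}$-saturated, produced by Lemma \ref{L:lem4}(c), so openness of the union of the towers needs $K_n\subseteq K_{n+1}^{o}$), it takes the Hausdorffness of $S/\rho$ and the quotient property of $\pi_\rho\times\pi_\rho$ from the proof of Theorem 1.56 in Carruth et al., and it then concludes by Theorem \ref{A: theo}. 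You instead prove continuity of the induced multiplication directly, building $\rho$-saturated open boxes inside $f^{-1}\bigl(\pi_\rho^{-1}(\mathcal O)\bigr)$, with towers that are open sets having compact closures (local compactness supplies the sandwich $\mathcal A\subseteq P_{n+1}\subseteq\overline{P_{n+1}}\subseteq P^{*}$); this makes openness of $V_1,V_2,V_3$ immediate, at the price of the separate --- and correctly executed --- saturation check via ``pick $m\ge n$ with $y\in K_m$''. You then obtain Hausdorffness by a second, two-variable run of the same construction against the saturated open set $(S\times S)\smallsetminus\rho$, which makes self-contained what the paper delegates to the literature; your closing remark (showing $\pi_\rho\times\pi_\rho$ and $\pi_\rho\times\pi_\rho\times\pi_\rho$ are quotient maps and citing Theorem \ref{A: theo} and Lemma \ref{L: lem2}) is exactly the paper's route. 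Two minor observations: in your version the interior condition $K_n\subseteq K_{n+1}^{o}$ is not actually used (only that the $K_n$ increase and exhaust $S$), and the Hausdorff step needs only that $\rho$ is a closed equivalence relation (transitivity), not the full congruence property, which is needed only in the continuity step via Theorem \ref{T:th1}.
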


\begin{proof}
From the proof of Theorem 1.56 of  \cite {Carruth}, we have that $\pi_\rho\times \pi_\rho$ is a quotient map, where $\pi_\rho : S\rightarrow S/\rho$ is a quotient map. Using same proof it is easy to prove that $S/\rho$ is a Hausdorff space. Now we show that $\pi_\rho \times \pi_\rho\times \pi_\rho$ is a quotient map.  Then we  apply Theorem  \ref{A: theo} to prove the theorem. 	

Let $Q$ be a subset of the product space  $(S/\rho)\times (S/\rho)\times (S/\rho)$ such that $(\pi_\rho\times \pi_\rho\times \pi_\rho)^{-1}(Q)$ is  an open set in $S\times S\times S$. We show that $Q$ is an open set in $(S/\rho)\times (S/\rho)\times (S/\rho)$. Let for $a,b, c\in S$, $(\pi_\rho(a),\pi_\rho(b),\pi_\rho(c))\in Q$. Then $(\pi_\rho^{-1}\pi_\rho(a)\times \pi_\rho^{-1}\pi_\rho(b)\times \pi_\rho^{-1}\pi_\rho(c))\subseteq (\pi_\rho\times \pi_\rho \times \pi_\rho)^{-1}(Q)$.  By Lemma \ref {L:lem5},  there exists a sequence $\{K_{n}\}_{n\in \N}$ of compact subsets of $S$ such that $S=\bigcup \{K_{n}\}_{n\in \N}$ and $K_{n}\subseteq K_{n+1}^{0}$ for each $n\in \N$.  Assume without any loss of generality that $a, b$ and $c$ lie in $K_{1}$.
Let $U_{0}= \pi_\rho^{-1}\pi_\rho(a)\bigcap K_{1}, V_{0}= \pi_\rho^{-1}\pi_\rho(b)\bigcap K_{1}$ and $W_{0}= \pi_\rho^{-1}\pi_\rho(c)\bigcap K_{1}$. Then $ U_0,V_0,W_0 $ being $ \pi_{\rho} $-saturated and closed subsets of the compact set $ K_1 $ are compact. So by Lemma \ref{L:lem6}, there exist sets $G$, $H$ and $I$ which are open in $K_{1}$ such that $ U_{0}\subseteq G, V_{0} \subseteq H, W_{0}\subseteq I $ and $U_{0}\times V_{0} \times W_{0}\subseteq G \times H \times I\subseteq (\pi_\rho \times \pi_\rho \times \pi_\rho)^{-1}(Q)\bigcap(K_{1}\times K_{1}\times K_{1})$.
 Using Lemma \ref{L:lem4}(c), and the fact that $K_{1}$ is compact, we obtain sets $U_{1}, V_{1}$ and $W_{1}$ which are closed in $K_{1}$ (and hence compact) and $\pi_\rho|_{K_{1}}-$saturated such that $U_{1}$ is $K_{1}$-nbd. of $U_{0}$, $V_{1}$ is $K_{1}$-nbd. of $V_{0}$, $W_{1}$ is $K_{1}$-nbd. of $W_{0}$; $U _{1}\subseteq G, V_{1}\subseteq H, W_{1}\subseteq I$. Also by Lemma \ref{L:lem3}, $\pi_\rho^{-1}\pi_\rho(U_{1}),\pi_\rho^{-1}\pi_\rho(V_{1}),\pi_\rho^{-1}\pi_\rho(W_{1})$ are closed. Again by Lemma  \ref {L:lem6}
there exist $L$, $M$ and $N$ which are open in $K_{2}$ so that $(\pi_\rho^{-1}\pi_\rho(U_{1})\cap K_{2}) \times (\pi_\rho^{-1}\pi_\rho(V_{1})\cap K_{2}) \times (\pi_\rho^{-1}\pi_\rho(W_{1})\cap K_{2})\subseteq L\times M \times N\subseteq (\pi_\rho \times \pi_\rho \times \pi_\rho)^{-1}(Q)\cap (K_{2}\times K_{2} \times K_{2})$. As done above, there are sets $U_{2}, V_{2}$ and $W_{2}$ which are closed in $K_{2}$ (and hence compact) and $\pi_\rho|_{K_{2}}$-saturated such that $U_{2}$ is a $K_{2}$-nbd. of $\pi_\rho^{-1}\pi_\rho(U_{1})\cap K_{2}$, $V_{2}$ is a $K_{2}$-nbd. of $\pi_\rho^{-1}\pi_\rho(V_{1})\bigcap K_{2}$ and $W_{2}$ is a $K_{2}$-nbd. of $\pi_\rho^{-1}\pi_\rho(U_{1})\cap K_{2}$, $U_{2}\subseteq L$, $V_{2}\subseteq M$ and $W_{2}\subseteq N$. In this way we find recursively three towered sequences $\{U_{n}\}$, $\{V_{n}\}$ and $\{W_{n}\}$  that are closed in $K_{n}$
(and hence compact) and $\pi_\rho|_{K_{n}}$-saturated such that $U_{n}$ is a $K_{n}$-nbd. of $\pi_\rho^{-1}\pi_\rho(U_{n-1})\cap K_{n}$, $V_{n}$ is a $K_{n}$-nbd. of $\pi_\rho^{-1}\pi_\rho(V_{n-1})\cap K_{n}$ and $W_{n}$ is a $K_{n}$-nbd. of $\pi_\rho^{-1}\pi_\rho(W_{n-1})\cap K_{n}$, for all $n\in \N$. Let  $U=\bigcup\{U_{n}\}_{n\in \N}$,$V=\bigcup\{V_{n}\}_{n\in \N}$ and $W=\bigcup\{W_{n}\}_{n\in \N}$. It is clear that $\pi_\rho^{-1}\pi_\rho(a)\subseteq U$, $\pi_\rho^{-1}\pi_\rho(b)\subseteq V$ and $\pi_\rho^{-1}\pi_\rho(c)\subseteq W$. Observe that $U$ is $\pi_\rho$-saturated, since $U=\bigcup\{\pi_\rho^{-1}\pi_\rho(U_{n})\}_{n\in \N}$. Similarly $V$ and $W$ are $\pi_\rho$-saturated. To see that $U$ is open, let $x\in U$. Then $x\in U_{n}$ for some $n\in \N$.
Now $U_{n+1}$ is a $K_{n+1}$-nbd. of $U_{n}$ and $U_{n}\subseteq K_{n+1}^{0}$. It follows that $U_{n+1}\cap K_{n+1}^{0}$ is a $K_{n+1}^{0}$-nbd. of $U_{n}$. Since $K_{n+1}^{0}$ is open in $S$, $U_{n+1}\cap K_{n+1}^{0}$ is a nbd. of $U_{n}$. Also we note that $x\in U_{n}\subseteq U_{n+1}\cap K_{n+1}^{0}\subseteq U_{n+1}\subseteq U$. Thus $U$ is a nbd. of $x$. Consequently, $U$ is open. Similarly we can show that  $V$ and $W$ are open. Now $(\pi_\rho(a),\pi_\rho(b),\pi_\rho(c))\in \pi_\rho(U)\times \pi_\rho(V)\times\pi_\rho(W)$ and $\pi_\rho(U)\times \pi_\rho(V)\times \pi_\rho(W)$
is open, since $U$, $V$ and $W$ are open $\pi_\rho$-saturated subsets of $S$. Therefore, $\pi_\rho\times \pi_\rho \times \pi_\rho$ is an open mapping and hence it is a quotient map. Therefore by Theorem \ref{A: theo}, $S/\rho$ is a topological ternary semigroup.
\end{proof}




 Before going to our next result, we need the following concepts.
 
 \begin{Def} \cite{Behnam} Let $X$ be a topological space and $\{Y_\alpha\}_{\alpha \in J}$ be a family of its subspace. If $X$ has a weak topology induced by $\{Y_\alpha\}_{\alpha \in J}$, then we denote this by $X=\Sigma_{\alpha\in J}Y_\alpha$. If $X$ has the weak topology induced by two subspaces $Y$ and $Z$ then we denote it by $X=Y\oplus Z$.
 \end{Def}
 
 \begin{Def}\cite{Behnam} A closed ideal $I$ of $S$ is called regular, if for any $s\in S\smallsetminus I$, there exist open nbds. $V_s$ and $W_I$ of $s$ and $I$ respectively, such that $V_s\cap W_I=\emptyset$.
 \end{Def}  
  
\begin{Th} \cite{Behnam}
Let $S$ be a topological semigroup and $I$ be a closed ideal of $S$. The following conditions on $S$  are equivalent:
\\(a) $S/I$ is a Hausdorff $k_w$-space;
\\(b) $I$ is regular in $S$ and $S=I\oplus(\sum_{n\in J}K_{n}')$, where $J\subseteq \N$ and either  $K_{n}'\cap I=\emptyset$ and $K_{n}'$ is compact in $S$ or $K_{n}'=I\cup K_{n}$, where $K_{n}$ is compact in the topological space $S_{I}:=(S\smallsetminus I^0,\tau)$, $ \tau $ being defined by 
$\tau := \{O: O \text{ is open in }S\text{ and either }O\cap I=\emptyset\text{ or }I\subseteq O\}$.\end{Th}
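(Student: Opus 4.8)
\noindent\emph{Sketch of a proof.} The plan is to prove the two implications $(a)\Rightarrow(b)$ and $(b)\Rightarrow(a)$ separately; since neither condition involves the multiplication, I will treat the statement as one purely about the quotient space $S/I$ obtained by collapsing the closed set $I$ to a single point $\ast$, and write $\pi\colon S\to S/I$ for the natural map. Two elementary facts are used throughout: $\pi$ restricts to a homeomorphism of $S\smallsetminus I$ onto $(S/I)\smallsetminus\{\ast\}$ (because $I$ is closed), and a subset $U\subseteq S/I$ with $\ast\in U$ is open exactly when $\pi^{-1}(U)$ is an open subset of $S$ containing $I$ (dually for closed sets). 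Since $\pi$ is a quotient map, it also suffices, when checking closedness/weak-topology conditions, to test on $\pi$-saturated sets. The degenerate cases ($I$ open, so $\partial I=\emptyset$, or $I=S$) are easy and can be disposed of first.

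For $(a)\Rightarrow(b)$, I first extract regularity of $I$: given $s\in S\smallsetminus I$ we have $\pi(s)\neq\ast$, and Hausdorffness of $S/I$ yields disjoint open $\widetilde V\ni\pi(s)$, $\widetilde W\ni\ast$, whence $V_s:=\pi^{-1}(\widetilde V)$ and $W_I:=\pi^{-1}(\widetilde W)$ are disjoint open sets in $S$ separating $s$ from $I$. Next I use the $k_w$-structure of $S/I$: write $S/I=\bigcup_n L_n$ with each $L_n$ compact and with closedness detected on the $L_n$, and set $K_n'':=\pi^{-1}(L_n)$, so $S=\bigcup_n K_n''$. For indices with $\ast\notin L_n$, $K_n''\subseteq S\smallsetminus I$ is a homeomorphic copy of the compact $L_n$, hence compact in $S$. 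For indices with $\ast\in L_n$ one has $I\subseteq K_n''$; here I would put $K_n:=K_n''\cap(S\smallsetminus I^0)$, verify — using the explicit description of $\tau$ and the continuity of $\pi$ restricted to $S_I$ — that $K_n$ is compact in $S_I$, and observe $K_n''=I\cup K_n$. Taking $K_n':=K_n''$ in all cases, it remains to show $S$ carries the weak topology determined by $\{I\}\cup\{K_n'\}_n$, i.e.\ $S=I\oplus\sum_n K_n'$: given $\pi$-saturated $C\subseteq S$ with $C\cap I$ closed in $I$ and each $C\cap K_n'$ closed in $K_n'$, one shows $\pi(C)\cap L_n=\pi(C\cap K_n')$ is closed in $L_n$ for every $n$ (the genuinely delicate case being $\ast\in L_n$, handled via the $S_I$-description and regularity of $I$), so $\pi(C)$ is closed in the $k_w$-space $S/I$ and $C=\pi^{-1}(\pi(C))$ is closed in $S$; the reduction from arbitrary $C$ to $\pi$-saturated $C$ uses that $I^0$ is open and hence harmless.

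For $(b)\Rightarrow(a)$, assume $I$ is regular and $S=I\oplus\sum_{n\in J}K_n'$ with the stated description, so in particular $S=I\cup\bigcup_n K_n'$. Hausdorffness of $S/I$: two points other than $\ast$ are separated because $\pi$ is a homeomorphism on $S\smallsetminus I$ and $S$ is Hausdorff; a point $\pi(s)\neq\ast$ and $\ast$ are separated by pushing forward the disjoint $V_s,W_I$ from regularity, after shrinking $V_s$ to $V_s\smallsetminus I$ and replacing $W_I$ by a $\pi$-saturated open neighbourhood of $I$ inside it (open sets containing $I$ descend to open neighbourhoods of $\ast$). The $k_w$-cover: put $L_n:=\pi(K_n')$; each $L_n$ is compact — directly when $K_n'$ is compact in $S$, and as $\pi(K_n)\cup\{\ast\}$ using the continuity of $\pi\colon S_I\to S/I$ when $K_n'=I\cup K_n$ — and $\bigcup_n L_n=\pi(S)=S/I$. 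Finally, if $B\subseteq S/I$ meets each $L_n$ in a closed set, then $\pi^{-1}(B)\cap K_n'$ is closed in $K_n'$ for all $n$ (continuity of $\pi|_{K_n'}$) and $\pi^{-1}(B)\cap I\in\{\emptyset,I\}$ is closed in $I$; the weak-topology hypothesis gives $\pi^{-1}(B)$ closed in $S$, hence $B$ is closed in $S/I$ since $\pi$ is a quotient map. Thus $S/I$ is a Hausdorff $k_w$-space.

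I expect the main obstacle to be reconciling the topology of the auxiliary space $S_I=(S\smallsetminus I^0,\tau)$ with the quotient topology of $S/I$ along the boundary $\partial I=I\smallsetminus I^0$: one must show that collapsing $I$ in $S$ is, up to the open (hence innocuous) set $I^0$, the same as collapsing $\partial I$ inside $S_I$, and that under this identification the weak-topology decomposition of $(b)$ corresponds exactly to $k_w$-detection of closed sets in $S/I$. Intertwined with this is the bookkeeping of which members $K_n'$ contain $I$ and which are disjoint from it, and the need to restrict the weak-topology test to $\pi$-saturated sets; making these precise, together with the repeated use of regularity of $I$ to separate $\ast$ from the remaining points, is where the real work lies. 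By contrast, the finite-productivity subtleties that complicated the earlier ternary arguments do not arise here, since no products are involved.
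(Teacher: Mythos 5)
The paper itself contains no proof of this statement: it is quoted verbatim from Khosravi \cite{Behnam}, and the authors immediately restate it for ternary semigroups ``without giving its proof''. So the only comparison available is with the cited source. Your overall architecture is the natural one and surely parallels it: the statement is purely topological (the multiplication never enters), one collapses the closed set $I$ to a point, pulls the $k_w$-decomposition of $S/I$ back through $\pi$ for (a)$\Rightarrow$(b), and pushes a decomposition forward for (b)$\Rightarrow$(a). Your (b)$\Rightarrow$(a) half is essentially complete: open subsets of $S$ containing $I$, and open subsets of $S\smallsetminus I$, are $\pi$-saturated, which gives both the Hausdorff separation of $\ast$ (via regularity) and the openness needed; compactness of $\pi(K_n')$ and the closedness-detection transfer use only continuity of $\pi|_{K_n'}$, continuity of the natural map $S_I\to S/I$, and the fact that $\pi$ is a quotient map.

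The genuine gap is in (a)$\Rightarrow$(b), exactly at the step you dismiss as ``harmless''. With your choice $K_n':=\pi^{-1}(L_n)$, the condition $S=I\oplus\sum_n K_n'$ must be checked against \emph{arbitrary} test sets $A$, and the reduction to $\pi$-saturated $A$ is not a formality. If $A\cap I$ is closed in $I$ and each $A\cap K_n'$ is closed in $K_n'$, the saturated argument (the restriction of $\pi$ to the closed saturated set $K_n'$ is a quotient map, so images of saturated traces are closed in $L_n$, and $k_w$-ness finishes) only yields that $A\cup I$ is closed in $S$; it does not exclude a point $x\in I\smallsetminus A$ lying in $\overline{A\smallsetminus I}$, and neither the openness of $I^0$ nor regularity of $I$ (which separates points \emph{outside} $I$ from $I$, not points of $I$ from outside sets) removes this possibility. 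Ruling out such an $x$ is the real content of this implication: for instance one can show that $\pi\bigl((A\smallsetminus I)\cap N\bigr)$ clusters at $\ast$ for every neighbourhood $N$ of $x$, use $k_w$-detection to trap this inside a single $K_{n_0}$, and then observe that $S\smallsetminus\overline{(A\smallsetminus I)\cap N\cap K_{n_0}}$ would otherwise be a saturated open neighbourhood of $I$ missing that set, so its closure must meet $I$ inside $A\cap I$ --- but converting this into a contradiction with $x\notin A$ requires shrinking $N$ so that its closure avoids the closed set $A\cap I$, which needs more than the Hausdorffness you have available (or a more careful choice of the $K_n'$, which is precisely the role the auxiliary space $S_I$ plays in the cited proof). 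A smaller omission: the compactness of $K_n$ in $S_I$ is asserted rather than proved; it does follow by pushing a $\tau$-open cover down to an open cover of $L_n$, since $\tau$-open sets are saturated. As written, then, the (b)$\Rightarrow$(a) half is sound in outline, but the (a)$\Rightarrow$(b) half is not yet a proof.
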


From the proof of above result we can comment that same is true in topological ternary semigroup. So we restate it for topological ternary semigroup without giving its  proof.

 \begin{Th}
 \label{T: th2} Let $S$ be a topological ternary semigroup and $I$ be a closed ideal of $S$. The following conditions on $S$  are equivalent:
 \\(a) $S/I$ is a Hausdorff $k_w$-space;
 \\(b) $I$ is regular  in $S$ and $S=I\oplus(\sum_{n\in J}K_{n}')$, where $J\subseteq \N$ and either  $K_{n}'\cap I=\emptyset$ and $K_{n}'$ is compact in $S$ or $K_{n}'=I\cup K_{n}$, where $K_{n}$ is compact in the topological space $S_{I}:=(S\smallsetminus I^0,\tau)$, $ \tau $ being defined by 
 $\tau := \{O: O \text{ is open in }S\text{ and either }O\cap I=\emptyset\text{ or }I\subseteq O\}$.
 \end{Th}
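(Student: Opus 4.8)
The plan is to obtain Theorem~\ref{T: th2} by transplanting, word for word, the proof of the corresponding statement for topological semigroups quoted just above from \cite{Behnam}. The point that makes this legitimate is that \emph{neither} condition (a) \emph{nor} condition (b) refers to the multiplication: both are assertions purely about the topology of $S$ relative to the closed subset $I$ and about the topology of the Rees quotient $S/I$, i.e.\ the space obtained from $S$ by collapsing $I$ to the single point $0:=\pi_{\rho_1}(I)$, where $\pi:=\pi_{\rho_1}:S\to S/I$ is the natural quotient map attached to $\rho_1=(I\times I)\cup\Delta(S)$. Since the arity of the operation plays no role in any step of Khosravi's argument, it suffices to recall its topological skeleton and observe that each step goes through unchanged; this is precisely why we restate the result without reproducing its proof.

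For the direction (a)$\Rightarrow$(b): assuming $S/I$ is a Hausdorff $k_w$-space, regularity of $I$ is obtained by separating $\pi(s)$ from $0$ in $S/I$ for any $s\in S\smallsetminus I$ and pulling the two disjoint open sets back along $\pi$ — the preimage of the neighbourhood of $0$ contains $I$, the preimage of the neighbourhood of $\pi(s)$ contains $s$, and the two are disjoint since $\pi^{-1}$ preserves disjointness. For the decomposition one writes $S/I=\bigcup_n L_n$ with each $L_n$ compact and witnessing the $k_w$-property, arranges (by adjoining the compact set $\{0\}$) that each $L_n$ either contains $0$ or misses it, and pulls back: the $L_n$ missing $0$ yield compact pieces $K_n'\subseteq S\smallsetminus I$ with $K_n'\cap I=\emptyset$, while the $L_n$ containing $0$ yield $K_n'=I\cup K_n$ with $K_n$ compact in the auxiliary space $S_I=(S\smallsetminus I^0,\tau)$; the direct-sum description $S=I\oplus(\sum_n K_n')$ is then transported from the weak-topology description of $S/I$ because $\pi$ is a quotient map. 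Conversely, for (b)$\Rightarrow$(a): Hausdorffness of $S/I$ is checked on pairs of distinct points — two points of $S\smallsetminus I$ are separated by disjoint open $\pi$-saturated sets built from the Hausdorffness of $S$ (each such point is a singleton $\rho_1$-class) after intersecting with the saturated sets supplied by regularity of $I$, while a pair consisting of $0$ and some $\pi(s)$ is separated by the images of the sets $V_s,W_I$ of the definition of regularity (both of which are $\pi$-saturated, since $V_s\cap I=\emptyset$ and $W_I\supseteq I$). The $k_w$-property of $S/I$ then follows from the compact cover $\{\pi(K_n')\}$, the fact that $\pi$ is a quotient map, and the hypothesis that $S$ carries the weak topology determined by $I$ and the $K_n'$.

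The step that I expect to be the actual work — and for which we rely on \cite{Behnam} — is the bookkeeping inside the $k_w$-decomposition: arranging the pulled-back compact pieces into exactly the two normal forms occurring in (b), and verifying that the weak topology on $S/I$ corresponds precisely to the stated decomposition $S=I\oplus(\sum_n K_n')$. This is delicate because of the auxiliary topology $\tau$ on $S\smallsetminus I^0$ (whose open sets must either miss $I$ or contain $I$) and because of the interplay between the interior $I^0$ and the quotient map; once these identifications are set up, the remaining point-set topology is identical to the binary case treated in \cite{Behnam}. Accordingly the proof amounts to invoking that argument and noting that these transfer points, and only these, need attention.
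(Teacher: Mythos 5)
Your proposal matches the paper exactly in spirit: the authors also give no independent proof, stating only that Khosravi's argument from \cite{Behnam} carries over verbatim to the ternary setting, which is justified for the same reason you identify — given that $I$ is a closed ideal, both conditions (a) and (b) and every step of the argument are purely topological statements about $S$, $I$, and the Rees quotient $S/I$, with the arity of the multiplication playing no role. Your sketch of the two directions is a faithful outline of that transferred argument, so nothing further is needed.
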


\begin{Th} Let $S$ be a topological ternary semigroup and $I$ be a regular closed ideal of $S$. If $S=I\oplus(\sum_{n\in Y}K_{n}')$ where $Y\subseteq \N$ and either $K_{n}'\cap I=\emptyset$ and $K_{n}'$ is compact in $S$ or $K_{n}'=I\cup K_{n}$ where $K_{n}$ is compact in $S_{I}$, then $S/I$ is a topological ternary semigroup.
\end{Th}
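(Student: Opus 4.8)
The plan is to identify $S/I$ with the Rees quotient $S/\rho_1$, where $\rho_1=(I\times I)\cup\Delta(S)$, to check that $\rho_1$ is a \emph{closed} congruence, and then to push everything through Theorem~\ref{A: theo}: once we know that $S/I$ is Hausdorff and that $\pi_{\rho_1}\times\pi_{\rho_1}\times\pi_{\rho_1}$ is a quotient map, the induced ternary multiplication is continuous and we are finished. By Theorem~\ref{T:th4} together with the facts that $I\times I$ is closed and $\Delta(S)$ is closed in the Hausdorff space $S$ (as in the proof of Corollary~\ref{C: cor2}), $\rho_1$ is a closed congruence, so by Theorem~\ref{D:def} $S/I=S/\rho_1$ is a ternary semigroup. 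Moreover the hypotheses on $I$ are precisely condition~(b) of Theorem~\ref{T: th2}, so condition~(a) holds: $S/I$ is a Hausdorff $k_w$-space. Thus the Hausdorff requirement of Definition~\ref{Def: ttsg} is already in hand, and the whole task is reduced to showing that $\pi:=\pi_{\rho_1}$ has the property that $\pi\times\pi\times\pi\colon S\times S\times S\to (S/I)^3$ is a quotient map. (Note that we \emph{cannot} simply invoke Theorem~\ref{T: th3}, since $S$ itself need not be a $k_w$-space.)

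For this I would show that $\pi\times\pi\times\pi$ is in fact an \emph{open} map, imitating the strategy of the proof of Theorem~\ref{ch6:t:locally comp}. Let $Q\subseteq(S/I)^3$ with $(\pi\times\pi\times\pi)^{-1}(Q)$ open in $S^3$, and let $(\pi(a),\pi(b),\pi(c))\in Q$. It suffices to produce open $\pi$-saturated sets $U,V,W\subseteq S$ with $\pi^{-1}\pi(a)\subseteq U$, $\pi^{-1}\pi(b)\subseteq V$, $\pi^{-1}\pi(c)\subseteq W$ and $U\times V\times W\subseteq(\pi\times\pi\times\pi)^{-1}(Q)$, because then $\pi(U)\times\pi(V)\times\pi(W)$ is an open neighbourhood of $(\pi(a),\pi(b),\pi(c))$ contained in $Q$. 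For the Rees congruence one has $\pi^{-1}\pi(x)=\{x\}$ when $x\notin I$ and $\pi^{-1}\pi(x)=I$ when $x\in I$, and a set is $\pi$-saturated exactly when it contains $I$ or is disjoint from $I$. If none of $a,b,c$ lies in $I$, this is immediate: start from a basic open box $P\times P'\times P''\ni(a,b,c)$ inside the preimage and, using that $I$ is a \emph{regular} ideal, shrink the factors to open neighbourhoods of $a,b,c$ disjoint from $I$; intersecting with $P,P',P''$ gives $U,V,W$ that miss $I$ and hence are automatically $\pi$-saturated.

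The substantive case is when some coordinate, say $a$, lies in $I$, so that $\pi^{-1}\pi(a)=I$ and we must fit an open $\pi$-saturated \emph{tube} around $I$ into the first factor. Here one exploits the decomposition $S=I\oplus(\sum_{n\in Y}K_n')$: a subset of $S$ is open iff its trace on $I$ and on every $K_n'$ is open there, while $S/I$ carries the weak topology determined by the compact sets $\pi(K_n')$ (each $\pi(K_n')$ being compact in $S/I$ — for pieces of the form $I\cup K_n$ this uses that $\pi$ maps the auxiliary space $S_I$ continuously into $S/I$). One then builds the tube piece by piece over the $K_n'$: on a compact piece $K_n'$ with $K_n'\cap I=\emptyset$ apply Lemma~\ref{L:lem6} to obtain an open box inside the preimage and then Lemma~\ref{L:lem3} and Lemma~\ref{L:lem4}(b),(c) to enlarge to $\pi$-saturated open pieces; on a piece of the form $I\cup K_n$ carry out the analogous step working downstairs in the Hausdorff $k_w$-space $S/I$ around the point $\pi(I)$, again using regularity of $I$ at the interface with the uncollapsed part; finally glue the pieces by the weak topology. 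Once $\pi\times\pi\times\pi$ is a quotient map, Theorem~\ref{A: theo} delivers continuity of the induced ternary multiplication on $S/I$, and together with the Hausdorffness already established this proves that $S/I$ is a topological ternary semigroup.

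I expect the main obstacle to be exactly this gluing over infinitely many pieces while simultaneously preserving openness and $\pi$-saturation, and in particular the treatment of the pieces $K_n'=I\cup K_n$ with $K_n$ compact only in the (generally non-Hausdorff) auxiliary space $S_I$: for these neither compactness in $S$ nor Lemma~\ref{L:lem4} applies directly, so one must rely on the regularity of $I$ and on a careful, Theorem~\ref{ch6:t:locally comp}-style bookkeeping of saturated neighbourhoods, passing between $S$ and $S/I$ as needed.
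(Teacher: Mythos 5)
Your first paragraph is fine and coincides with the paper's own route: the hypotheses are exactly condition (b) of Theorem \ref{T: th2}, so $S/I$ is a Hausdorff $k_w$-space, and everything reduces to showing that $\pi_I\times\pi_I\times\pi_I$ is a quotient map so that Theorem \ref{A: theo} applies. The divergence is in how that reduction is discharged, and this is where your proposal has a genuine gap: the decisive step is never actually proved. The paper does not attempt any tube construction here; having established that $S/I$ is a Hausdorff $k_w$-space, it invokes the machinery already set up in (the proof of) Theorem \ref{T: th3} --- finite productivity of Hausdorff $k_w$-spaces together with the Michael-type theorem on products of quotient maps --- to conclude that $(S/I)\times(S/I)\times(S/I)$ is Hausdorff and that $\pi_I\times\pi_I\times\pi_I$ is a quotient map, and then finishes by Theorem \ref{A: theo}. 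You instead reject that appeal (a defensible reading, since the statement of Theorem \ref{T: th3} assumes the \emph{domain} is a $k_w$-space, which $S$ need not be here) and propose to redo, by hand, an analogue of the construction in Theorem \ref{ch6:t:locally comp}. But that construction is precisely the content you leave open: the case where a coordinate lies in $I$, i.e. building an open $\pi_I$-saturated neighbourhood of $I$ fitting into the preimage of $Q$, is only gestured at (``build the tube piece by piece \dots glue by the weak topology''), and you yourself flag that Lemmas \ref{L:lem6}, \ref{L:lem3} and \ref{L:lem4} do not apply to the pieces $K_n'=I\cup K_n$, where $K_n$ is compact only in the auxiliary space $S_I$ and $I$ itself may be non-compact. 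As it stands this is a statement of the difficulty, not a proof of the theorem.

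A further specific error: you announce that you will show $\pi_I\times\pi_I\times\pi_I$ is an \emph{open} map. For a Rees quotient this is false in general, because $\pi_I$ itself is not open: if $U$ is a small open set meeting $I$ but not containing it, then $\pi_I^{-1}\pi_I(U)\supseteq I$ need not be open, so $\pi_I(U)$ is not open; and openness of the triple product map would force openness of $\pi_I$. What your subsequent outline actually aims at is the quotient property (images of saturated open sets, or equivalently openness of sets with open saturated preimage), so the ``open map'' claim should be dropped --- but even with that repaired, the missing construction above is the real gap. To complete the argument along the paper's lines you should either justify the product-of-quotient-maps step directly from the Hausdorff $k_w$-property of $S/I$ (as the paper does via Theorem \ref{T: th3} and Michael's theorem), or genuinely carry out the saturated-tube construction you sketch, which is exactly the part you have not supplied.
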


\begin{proof} According to Theorem \ref{T: th2},  $S/I$ is a Hausd\"{o}rff $k_w$-space.  Again from Theorem \ref{T: th3},  $(S/I)\times (S/I)\times (S/I)$ is a Hausdorff space and $\pi_I \times \pi_I \times \pi_I$ ( where $\pi_I : S\rightarrow S/I$ is a quotient map) is a quotient map. Hence by  Theorem \ref{A: theo}, the result follows.
\end{proof}

\begin{Th} Let $S$ be a topological ternary semigroup which is a $k$-space and $I$ be a regular closed ideal of $S$. If $S\smallsetminus I^0$ is a $K_{w}$-space, then $S / I$ is a topological ternary semigroup.\end{Th}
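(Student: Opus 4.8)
The plan is to first establish that $S/I$ is a Hausdorff $k_{w}$-space, and then — rather than verifying that $\pi_I\times\pi_I\times\pi_I$ is a quotient map, which is the mechanism behind Theorem \ref{A: theo} and Theorem \ref{T: th3} but is not directly available here since $S$ is only a $k$-space, not a $k_{w}$-space — to deduce continuity of the induced ternary multiplication by checking it on the pieces of a $k_{\omega}$-decomposition of $(S/I)^{3}$. Throughout I would use the elementary facts about the Rees congruence $\rho_{1}=(I\times I)\cup\Delta(S)$: the fibre $\pi_I^{-1}([I])$ is $I$ while all other fibres are singletons, so a subset of $S$ is $\pi_I$-saturated precisely when it is disjoint from $I$ or contains $I$.

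For the topology on $S/I$: Hausdorffness follows by separating two classes lying off $I$ with disjoint open sets of $S$ made saturated by intersecting with $S\smallsetminus I$, and separating $[I]$ from any other class by feeding the regularity neighbourhoods $V_{s}$ of a point and $W_{I}$ of $I$ into the saturation observation above. For the $k_{w}$ property, fix a $k_{\omega}$-decomposition $\{L_{n}\}$ of $S\smallsetminus I^{0}$ with $L_{n}\subseteq L_{n+1}$ and set $D_{n}:=\pi_I(L_{n})\cup\{[I]\}$; each $D_{n}$ is compact, $\bigcup_{n}D_{n}=S/I$ because $\bigcup_{n}L_{n}=S\smallsetminus I^{0}\supseteq S\smallsetminus I$, and I must show that a set $A\subseteq S/I$ meeting every $D_{n}$ in a closed set is closed. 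Since $S$ is a $k$-space this amounts to $\pi_I^{-1}(A)\cap C$ being closed in $C$ for every compact $C\subseteq S$. Here $C\smallsetminus I^{0}$ is a compact subset of the Hausdorff $k_{\omega}$-space $S\smallsetminus I^{0}$, so it lies in some $L_{N}$; restricting $\pi_I$ to $L_{N}$ shows $\pi_I^{-1}(A)\cap L_{N}$ is closed in $S$, and then a short set computation (splitting according to whether $[I]\in A$, using that $I\cap C$ is closed, $C\smallsetminus I\subseteq L_{N}$, and the identity $(I\cap C)\cup(F\smallsetminus I)=(I\cap C)\cup F$ whenever $F\subseteq\pi_I^{-1}(A)\cap C$) gives that $\pi_I^{-1}(A)\cap C$ is closed in $C$.

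With $S/I$ a Hausdorff $k_{\omega}$-space, $(S/I)^{3}$ is a Hausdorff $k_{\omega}$-space with decomposition $\{D_{n}\times D_{n}\times D_{n}\}$ by finite productivity of $k_{\omega}$-spaces (Milnor, as already cited), so the induced ternary multiplication $g\colon(S/I)^{3}\to S/I$ is continuous iff each restriction $g|_{D_{n}\times D_{n}\times D_{n}}$ is. Fixing $z_{0}\in I$ and putting $M_{n}:=L_{n}\cup\{z_{0}\}$, which is compact in $S$ with $\pi_I(M_{n})=D_{n}$, the map $\pi_I\times\pi_I\times\pi_I\colon M_{n}^{3}\to D_{n}^{3}$ is a continuous surjection from a compact space onto a Hausdorff space, hence a quotient map; the continuous map $(x,y,z)\mapsto[xyz]$ on $M_{n}^{3}$ is constant on its fibres because $\rho_{1}$ is a congruence (Theorem \ref{T:th4}), so it descends to a continuous map on $D_{n}^{3}$ which coincides with $g|_{D_{n}^{3}}$. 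Hence $g$ is continuous, and since $S/I$ is Hausdorff it is a topological ternary semigroup.

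I expect the continuity of $g$ to be the real obstacle: both earlier continuity arguments — local compactness of $S$ and $S/\rho$ in Theorem \ref{A: theo}, and $k_{w}$-ness of $S$ in Theorem \ref{T: th3} — fail under the present hypotheses, so the piecewise lifting of each compact block $D_{n}^{3}$ of $(S/I)^{3}$ back to a compact block $M_{n}^{3}$ of $S^{3}$ is what makes the argument go through. The other delicate point is getting the set-theoretic bookkeeping in the $k_{w}$ verification exactly right, together with the standard (though not isolated in the paper) fact that a compact subset of a Hausdorff $k_{\omega}$-space is contained in some member of its decomposition.
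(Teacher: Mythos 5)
Your proposal is correct, but it follows a genuinely different route from the paper's, most notably in the continuity step. The paper gets the Hausdorff $k_w$ property of $S/I$ by checking condition (b) of its transferred characterization (Theorem \ref{T: th2}): it sets $L_n'=L_n$ or $L_n\cup I$ and cites that theorem, and then obtains continuity of the induced multiplication by arguing that $\pi_I\times\pi_I\times\pi_I$ is a quotient map --- invoking the mechanism of Theorem \ref{T: th3}, i.e.\ Michael's product-of-quotient-maps theorem, with $S$ a Hausdorff $k$-space and $(S/I)\times(S/I)\times(S/I)$ a Hausdorff $k_w$-space --- and then applying Theorem \ref{A: theo}. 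You instead prove the Hausdorff $k_w$ property of $S/I$ by hand (separation via regularity of $I$ and saturation; the explicit decomposition $D_n=\pi_I(L_n)\cup\{[I]\}$, verified using the $k$-space property of $S$ and the fact that $C\smallsetminus I^0$ lands in some $L_N$), and you bypass the quotient-map question entirely: continuity of $g$ is checked block by block on $D_n\times D_n\times D_n$, lifting each block to the compact set $M_n^3=(L_n\cup\{z_0\})^3$ and using that a continuous surjection from a compact space onto a Hausdorff space is a quotient map, together with the congruence property of $\rho_1$. What the paper's route buys is brevity, at the price of leaning on two imported results (the Khosravi-type characterization and Michael's theorem, the latter invoked somewhat loosely ``via Theorem \ref{T: th3}'' even though $S$ here is only a $k$-space); what your route buys is a self-contained, elementary argument whose only external ingredients are Milnor's finite productivity of Hausdorff $k_w$-spaces (already cited in the paper, and you correctly use the product decomposition $\{D_n^3\}$) and the standard fact --- which you rightly flag as not isolated in the paper --- that a compact subset of a Hausdorff $k_\omega$-space lies in some member of an increasing decomposition. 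Two small caveats: you tacitly replace $\{L_n\}$ by an increasing decomposition (harmless, but worth saying), and you read the hypothesis ``$S\smallsetminus I^0$ is a $k_w$-space'' in the subspace (hence Hausdorff) topology, which is the natural reading and the one your compactness/closedness arguments for $L_N$ in $S$ require, whereas the paper's Theorem \ref{T: th2}(b) is phrased with the modified topology $S_I$; since the subspace topology is finer, this does not cause a gap, but it is the one point where your argument and the paper's hypotheses should be reconciled explicitly.
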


\begin{proof} 
Let $S$ be a topological ternary semigroup and $I$ be a regular closed ideal of $S$ with $S\smallsetminus I^0$ is a $k_w$-space.  Then there exist countable number of compact sets, say, $\{L_n\}_{n\in P}$ where $P\subseteq \N$ such that $S\smallsetminus I^0=\sum_{n\in P}L_{n}$. Define \[\begin{array}{l}{L_{n}' :=\left\{\begin{array}{l} {L_{n}\;,\;\text{if} \,\ L_{n}\cap I= \emptyset} \\ {L_{n}\cup I\;,\; \text{otherwise}} \end{array}\right.} \end{array}\]\\Then we see that $S=I\oplus(\sum_{n\in P}L_{n}')$ and $L_{n}'$ satisfies the  condition of the Theorem \ref{T: th2} and hence $S/I$ is a $k_w$-space. Since $S$ and $(S/I) \times (S/I) \times (S/I)$ are Hausdorff $k$-spaces, by Theorem \ref{T: th3}, we have $\pi_I \times \pi_I \times \pi_I$ is a quotient map (where $\pi_I : S\rightarrow S/I$ is the quotient map). Hence by \ref{A: theo}, $S/I$ is a topological ternary semigroup.	
\end{proof}

There is another way of introducing quotient structure on a particular type of topological ternary semigroup. This particular topological ternary semigroup is topological ternary group. We shall make  quotient of it with the help of a normal  ternary subgroup.

Let $S$ be a topological ternary group and $H$ be a normal ternary subgroup of $S$. Then from Theorem 3.3.1 of \cite{Sj}, we know that the set $S/H: = \{xHH : x\in S\}$ of all the left cosets of $S$ becomes a ternary group under the ternary product $(xHH, yHH, zHH)\mapsto (xyz)HH$ \big[note that $(x,y,z)\mapsto xyz$ is the ternary multiplication from $S\times S\times S$  to $S$\big]. Define a mapping $\pi_H : S\rightarrow S/H$ by $\pi_H(x)=xHH ,\,\ \forall x\in S$. We now introduce a topology on $S/H$ in the following manner: a subset $\mathcal U$ of $S/H$ is open in $S/H$ if and only if $\pi_H^{-1}(\mathcal U)$ is open in $S$. Then $\pi_H$ induces a topology on $S/H$ which is the quotient topology and the map $\pi_H$ is a quotient map. We now prove the following theorem which ensures that $S/H$ with the quotient topology and ternary multiplication as mentioned above, becomes a topological ternary group. Following results are essential to prove our main theorem.

\begin{Th} \cite{SSS1}
	Let $S$ be a topological ternary group. Then for any $a,b\in S$ the following four maps are homeomorphisms from $S$ onto itself.\\
	{\em(i)} The left translation $l_{ab}:x\longmapsto abx\ (x\in S)$;\\
	{\em(ii)} The right translation $r_{ab}:x\longmapsto xab\ (x\in S)$;\\
	{\em(iii)} The lateral translation $t_{ab}:x\longmapsto axb\ (x\in S)$;\\
	{\em(iv)} The inversion $i_{_S}:x\longmapsto x^{-1}\ (x\in S)$
	\label{ch2:t:tg_homeo}.\end{Th}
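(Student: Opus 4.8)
The plan is to treat the four maps separately, in each case reducing the statement to the continuity of the ternary multiplication (and, for the inversion, to the continuity postulated in the definition of a topological ternary group) and then exhibiting an explicit two‑sided inverse which is itself again one of the four listed maps. Since a homeomorphism is precisely a continuous bijection with continuous inverse, this strategy gives everything at once.

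First I would dispose of continuity. The left translation $l_{ab}$ factors as $x\longmapsto (a,b,x)\longmapsto abx$, i.e.\ as the insertion of $x$ into the third coordinate of $S\times S\times S$ (continuous, the other two coordinates being constant) followed by the ternary multiplication; the right translation $r_{ab}$ and the lateral translation $t_{ab}$ are continuous for the same reason, inserting $x$ into the first, resp.\ the middle, coordinate. The inversion $i_{_S}$ is continuous by hypothesis. Next I would produce the inverses: writing $a^{-1},b^{-1}$ for the (unique) inverses, I claim $l_{ab}^{-1}=l_{b^{-1}a^{-1}}$, $r_{ab}^{-1}=r_{b^{-1}a^{-1}}$, $t_{ab}^{-1}=t_{a^{-1}b^{-1}}$, and $i_{_S}^{-1}=i_{_S}$. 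Each identity is a three‑line computation using only the ternary‑semigroup axiom $(uvw)yz=u(vwy)z=uv(wyz)$ together with the defining relations $aa^{-1}x=a^{-1}ax=xaa^{-1}=xa^{-1}a=x$. For instance $l_{b^{-1}a^{-1}}(l_{ab}(x))=b^{-1}a^{-1}(abx)=b^{-1}(a^{-1}ab)x=b^{-1}bx=x$, and the reverse composite is computed the same way; the cases of $r_{ab}$ and $t_{ab}$ are entirely analogous, and for $i_{_S}$ one needs only $(x^{-1})^{-1}=x$, which holds because the equations expressing ``$x$ is the inverse of $x^{-1}$'' are literally the same as those expressing ``$x^{-1}$ is the inverse of $x$''. (Alternatively, surjectivity of $l_{ab},r_{ab},t_{ab}$ is immediate from the solvability axioms defining a ternary group, and injectivity follows from the same cancellation identities.)

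Finally, since in every case the inverse map is again a left, right, or lateral translation, or the inversion, it is continuous by the first step; hence each of the four maps is a continuous bijection with continuous inverse, i.e.\ a homeomorphism of $S$ onto itself. The only point that needs any care — and it is purely mechanical — is the bookkeeping in the five‑fold products: one must apply the associativity identity to the correct block of three consecutive factors in order to expose a cancellable pair $aa^{-1}$ or $a^{-1}a$, after which the defining relations finish the job. There is no genuine topological difficulty once continuity of the ternary multiplication and of the inversion is in hand.
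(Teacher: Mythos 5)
Your proof is correct. Note that this paper does not actually prove the statement: it is quoted from the earlier work \cite{SSS1} as a prerequisite, so there is no in-paper argument to compare against; your route (continuity via composing the coordinate insertion $x\mapsto(a,b,x)$ with the ternary multiplication, plus exhibiting $l_{b^{-1}a^{-1}}$, $r_{b^{-1}a^{-1}}$, $t_{a^{-1}b^{-1}}$ and $i_{_S}$ itself as explicit two-sided inverses) is the standard one and is exactly what such a proof should look like. The only hypothesis you use tacitly is that every element of a ternary group possesses an inverse in the sense of the paper's definition of invertibility (the unique skew element); this is presupposed by the paper's own definition of a topological ternary group, which speaks of the inversion map $x\mapsto x^{-1}$, so it is not a gap relative to the paper's conventions, though a one-line citation for it would round the argument off.
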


From above Theorem \ref{ch2:t:tg_homeo} we can say that for any open set $U$ in a topological ternary group $S$ and for any $a,b\in S$, the sets $abU,Uab,aUb,U^{-1}$ are open in $S$ as well, since $l_{ab},r_{ab},t_{ab},i_{_S}$ are homeomorphisms from $S$ onto itself.

An immediate corollary of Theorem \ref{ch2:t:tg_homeo} is the following:
\begin{Cor}
	Let $S$ be a topological ternary group, $U$ be open in $S$ and $A$ be any set in $S$. Then 
	$AAU, AUA, UAA$ are open in $S$.
	\label{ch2:c:closurecor}\end{Cor}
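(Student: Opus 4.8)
The plan is to realise each of $AAU$, $UAA$, $AUA$ as a union of images of $U$ under the translation maps of Theorem~\ref{ch2:t:tg_homeo}, and then use that an arbitrary union of open sets is open. Unwinding the set-product notation, $AAU=\{abx:a,b\in A,\ x\in U\}=\bigcup_{a,b\in A}abU$, and for fixed $a,b\in S$ one has $abU=l_{ab}(U)$, where $l_{ab}$ is the left translation $x\mapsto abx$. Likewise $UAA=\bigcup_{a,b\in A}Uab$ with $Uab=r_{ab}(U)$, and $AUA=\bigcup_{a,b\in A}aUb$ with $aUb=t_{ab}(U)$, the maps $r_{ab}$ and $t_{ab}$ being the right and lateral translations.

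First I would record that, by Theorem~\ref{ch2:t:tg_homeo}, each $l_{ab}$, $r_{ab}$, $t_{ab}$ (for $a,b\in S$) is a homeomorphism of $S$ onto itself, hence in particular an open map. Consequently, whenever $U$ is open, every translate $l_{ab}(U)$, $r_{ab}(U)$, $t_{ab}(U)$ is open in $S$. Taking unions over all $a,b\in A$ in the three displayed decompositions then exhibits $AAU$, $UAA$ and $AUA$ as unions of open sets, so they are open. The degenerate case $A=\emptyset$ needs no separate treatment: the three products are then the empty union $\emptyset$, which is open.

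There is essentially no obstacle here; as the surrounding text indicates, the statement is immediate from Theorem~\ref{ch2:t:tg_homeo}. The only points requiring a little care are correctly pairing each set-product expression with the matching translation map among the three in Theorem~\ref{ch2:t:tg_homeo}, and recalling that a homeomorphism is in particular an open mapping, so that it sends the open set $U$ to an open set.
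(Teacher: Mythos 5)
Your proposal is correct and follows essentially the same route as the paper: write each product as a union of translates $abU$, $Uab$, $aUb$, note these are open because the translations of Theorem~\ref{ch2:t:tg_homeo} are homeomorphisms, and conclude by taking unions. The remark on the empty case is a harmless extra not needed in the paper's argument.
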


\begin{proof} 
	 We have $AAU =\bigcup \{abU: a,b\in A\}$ and given that $U$  is open.  Therefore as an immediate consequence of Theorem \ref{ch2:t:tg_homeo}, $abU$  is open. Since arbitrary union of open sets is open, it follows that $AAU$ is open. Similarly other results follow.
\end{proof}

\begin{Th} Let $S$ be a topological ternary group and $H$ be a ternary subgroup of $S$. If $H$ is open in $S$ then $H$ is closed in $S$.\label{ch2:t:clopen}\end{Th}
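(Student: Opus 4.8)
The plan is to transcribe the classical argument that an open subgroup of a topological group is closed, using the left cosets $xHH$ ($x\in S$) of the ternary subgroup $H$ in place of the cosets $gH$. The proof would rest on three facts: (a) each left coset $xHH$ is open in $S$; (b) $H$ is itself one of these left cosets; (c) the left cosets of $H$ partition $S$. Granting (a)--(c), one gets $S\setminus H=\bigcup\{C:C\text{ a left coset of }H,\ C\neq H\}$, a union of open sets, whence $H$ is closed.

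For (a) I would fix $x\in S$ and write $xHH=\bigcup_{h\in H}\{xhy:y\in H\}=\bigcup_{h\in H}l_{xh}(H)$, where $l_{xh}$ is the left translation $y\mapsto xhy$. By Theorem \ref{ch2:t:tg_homeo}(i) each $l_{xh}$ is a self-homeomorphism of $S$, so $l_{xh}(H)$ is open (as $H$ is open), and hence so is the union $xHH$. Grouping the union over the other factor and invoking the lateral translations $t_{xh}$ of Theorem \ref{ch2:t:tg_homeo}(iii) would work equally well; note that Corollary \ref{ch2:c:closurecor} does not apply directly here, since in $xHH$ the open factor is not in the position the corollary requires.

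For (b) I would check $hHH=H$ for each $h\in H$: the inclusion $hHH\subseteq H$ holds because $H$ is closed under the ternary multiplication, and conversely, given $h'\in H$, solvability of the equation $h\,h\,z=h'$ inside the ternary group $H$ exhibits $h'$ as an element of $hHH$. Fact (c), namely that the sets $xHH$ are pairwise disjoint or equal and cover $S$ (with $x\in xHH$ for each $x$), is the standard coset calculus for ternary groups already underlying the construction of the quotient ternary group $S/H$ (Theorem 3.3.1 of \cite{Sj}), so I would simply invoke it.

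The step I expect to need the most care is the interplay of (b) and (c): one must be certain that $H$ genuinely occurs among the left cosets and that $\{xHH\}_{x\in S}$ is an honest partition of $S$ --- both of which use that $H$ is a ternary \emph{subgroup} (a ternary group under the restricted operation), not merely a sub-ternary-semigroup. The topological content, step (a), is then immediate from the homeomorphism property of translations in Theorem \ref{ch2:t:tg_homeo}.
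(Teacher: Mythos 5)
Your proposal is correct and follows essentially the same route as the paper: both show each coset $xHH$ is open by writing it as a union of translates $l_{xh}(H)$ of the open set $H$, identify $H$ with the coset $hHH$ for $h\in H$, and use the coset partition to express $S\smallsetminus H$ as a union of open cosets (the paper just makes the partition explicit via the equivalence relation $aHH=bHH$ rather than citing it). Your remark that Corollary \ref{ch2:c:closurecor} does not apply directly is a fair observation, and indeed the paper does not use it here either.
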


\begin{proof} 
	
If $H$ is open then $\forall s\in S$, $sHH$ is also open.  To justify it, we have that $sHH=\displaystyle\bigcup_{h\in H}shH$. Now each $shH$ is open [since $H$ is a ternary sugroup of $S$ and $S$ is a topological ternary group]. So $\forall s\in S$, $sHH=\displaystyle\bigcup_{h\in H}shH$ is open, as arbitrary union of open sets is open. 

Again $\forall h\in H$, $hHH=\displaystyle\bigcup_{h'\in H}hh'H=H$ [since $H$ is a ternary subgroup of $S$]. We now show that $\forall a\in S\smallsetminus H$, $aHH\cap H=\emptyset$ and $S\smallsetminus H=\displaystyle\bigcup_{a\in S\smallsetminus H}aHH$. If possible let $x\in aHH\cap H$.  Then $x=ah_1h_2=h_3$ for some $h_1,h_2,h_3\in H$. So $a= h_3h_2^{-1}h_1^{-1}\in H$, a contradiction to the fact that $a\in S\smallsetminus H$. Therefore  $\forall a\in S\smallsetminus H$, $aHH\cap H=\emptyset$. 

We now define a relation `$\rho $' on $S$ by $a\rho b$ iff $aHH=bHH$ for $a,b\in S$. Then it is easy to verify that `$\rho $' is an equivalence relation on $S$. Therefore it divides $S$ into disjoint partitions. Let $a\in S$. Then $cl(a)=\{b\in S: aHH=bHH\}=\{b\in S: ah_1h_2=bh_3h_4, h_i\,(i=1,\ldots,4)\in H\}=\{b\in S: b= ah_1h_2h_4^{-1}h_3^{-1}\in aHH\}=aHH$ [it is easy to prove that $b\in aHH \Leftrightarrow aHH=bHH$]. Therefore $S\smallsetminus H=\displaystyle\bigcup_{a\in S\smallsetminus H}aHH $ [since $H$ is also a class disjoint from each $aHH$, $a\in S\smallsetminus H$]. In the beginning already we have proved that $aHH$ is open for $a\in S$. Therefore $\displaystyle\bigcup_{a\in S\smallsetminus H}aHH$  is open. So $S\smallsetminus  H=\displaystyle\bigcup_{a\in S\smallsetminus H}aHH$ is open. Hence $H$ is closed. \end{proof}  
 Now we prove our main theorem.
\begin{Th}
	Let $S$ be a topological ternary group and $H$ be a closed normal ternary subgroup of $S$. Then $S/H$ is a topological ternary group.\end{Th}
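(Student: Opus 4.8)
The plan is to verify the three requirements for a topological ternary group, in the order: (1) the quotient map $\pi_H$ is open, which forces $\pi_H\times\pi_H\times\pi_H$ to be a quotient map; (2) the induced ternary multiplication $g$ on $S/H$ is continuous; (3) the inversion on $S/H$ is continuous; (4) $S/H$ is Hausdorff. The algebraic part — that $S/H$ is a ternary group under $(xHH)(yHH)(zHH)=(xyz)HH$ — is already available from Theorem 3.3.1 of \cite{Sj}, so only the topology has to be handled.

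First I would show $\pi_H$ is open. For open $U\subseteq S$ one has $\pi_H^{-1}(\pi_H(U))=\bigcup_{u\in U}uHH=UHH$, which is open in $S$ by Corollary \ref{ch2:c:closurecor}; hence $\pi_H(U)$ is open in the quotient topology. Thus $\pi_H$ is an open, continuous surjection, and therefore so is $\pi_H\times\pi_H\times\pi_H$, which is then a quotient map. Exactly as in Lemma \ref{L:lem1} the square with $f$, $\pi_H$, $g$, $\pi_H\times\pi_H\times\pi_H$ commutes, so the argument of Theorem \ref{A: theo} applies verbatim: $\pi_H\circ f$ is continuous, hence $(\pi_H\times\pi_H\times\pi_H)^{-1}(g^{-1}(U))$ is open for every open $U\subseteq S/H$, and since $\pi_H\times\pi_H\times\pi_H$ is a quotient map, $g^{-1}(U)$ is open; so $g$ is continuous. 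Note that, thanks to openness of $\pi_H$, no compactness or $k_w$-hypothesis is needed here.

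For the inversion, let $\sigma\colon S\to S$, $\sigma(x)=x^{-1}$, which is continuous by Theorem \ref{ch2:t:tg_homeo}(iv). I would first check that $xHH=yHH$ forces $x^{-1}HH=y^{-1}HH$: writing $x=yh_1h_2$ with $h_i\in H$, the ternary inverse identities give $x^{-1}=h_2^{-1}h_1^{-1}y^{-1}\in HHy^{-1}=y^{-1}HH$, using the normality condition $HHg=gHH$. Hence $\pi_H\circ\sigma$ factors through $\pi_H$ as a well-defined map $j\colon S/H\to S/H$, $j(xHH)=x^{-1}HH$, and a short computation (e.g. $(xHH)(x^{-1}HH)(zHH)=(xx^{-1}z)HH=zHH$, and the symmetric identities) shows $j$ is precisely the inversion of the ternary group $S/H$. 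Since $j\circ\pi_H=\pi_H\circ\sigma$ is continuous and $\pi_H$ is a quotient map, $j$ is continuous.

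The main obstacle, as stressed in the introduction, is Hausdorffness. Here I would fix any $h_0\in H$ and consider the continuous map $\varphi\colon S\times S\to S$, $\varphi(x,y)=y^{-1}xh_0$ (continuous since ternary multiplication and inversion are), and claim $R:=(\pi_H\times\pi_H)^{-1}(\Delta(S/H))=\varphi^{-1}(H)$. Indeed, if $y\in xHH$, say $y=xh_1h_2$, then $\varphi(x,y)=(h_2^{-1}h_1^{-1}x^{-1})xh_0=h_2^{-1}h_1^{-1}h_0\in H$ since $H$ is a ternary subgroup; conversely, if $\varphi(x,y)=h\in H$, then the ternary associativity computation $y\,(y^{-1}xh_0)\,h_0^{-1}=x$ gives $x=y\,h\,h_0^{-1}\in yHH$, hence $xHH=yHH$. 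As $H$ is closed in $S$, $R=\varphi^{-1}(H)$ is closed in $S\times S$. Finally, since $\pi_H$ is an open continuous surjection with $R$ closed, the usual argument separates distinct points: given $\pi_H(x)\neq\pi_H(y)$, pick a basic open $U\times V\ni(x,y)$ disjoint from $R$; then $\pi_H(U)$ and $\pi_H(V)$ are disjoint open neighbourhoods of $\pi_H(x)$ and $\pi_H(y)$. Thus $S/H$ is Hausdorff, and combined with (1)–(3) it is a topological ternary group. The delicate checks are the ternary-associativity manipulations behind $x^{-1}=h_2^{-1}h_1^{-1}y^{-1}$ and $y\,(y^{-1}xh_0)\,h_0^{-1}=x$, and the verification that $\pi_H$ is open so that openness (hence the quotient-map property) survives the triple product.
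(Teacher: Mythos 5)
Your proposal is correct, and its skeleton coincides with the paper's: you show $\pi_H$ is open via $\pi_H^{-1}(\pi_H(U))=UHH$ and Corollary \ref{ch2:c:closurecor}, conclude $\pi_H\times\pi_H\times\pi_H$ is a quotient map, and invoke Theorem \ref{A: theo} for continuity of the induced multiplication, exactly as in the paper's Step I. The two places where you diverge are local but worth noting. For the inversion, the paper argues pointwise with neighbourhoods (given $\mathcal W\ni a^{-1}HH$, pull back to $W\ni a^{-1}$, choose $V$ with $V^{-1}\subseteq W$, push forward by the open map $\pi_H$), whereas you factor $\pi_H\circ\sigma$ through the quotient map; both are valid, and your route also makes explicit the well-definedness check $xHH=yHH\Rightarrow x^{-1}HH=y^{-1}HH$ using normality, which the paper handles implicitly by identifying $a^{-1}HH$ as the group-theoretic inverse of $aHH$ in $S/H$. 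For Hausdorffness, the paper identifies $S/H$ with $S/\rho$ for $\rho=\{(x,y):xHH=yHH\}$, asserts that $\rho$ is closed ``since $H$ is closed, multiplication is continuous and $S$ is Hausdorff,'' and cites a standard result (\cite{Kelley}) that an open quotient by a closed relation is Hausdorff; you instead exhibit the relation as $\varphi^{-1}(H)$ for the continuous map $\varphi(x,y)=y^{-1}xh_0$ and rerun the separation argument by hand. Your computations $(xh_1h_2)^{-1}xh_0=h_2^{-1}h_1^{-1}h_0\in H$ and $y\,(y^{-1}xh_0)\,h_0^{-1}=x$ are correct ternary-associativity manipulations, so your version actually supplies the detail the paper leaves as an assertion; the paper's version is shorter because it outsources both the closedness claim and the separation argument to the literature. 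Note also that a single right translation $\varphi(x,y)=y^{-1}xh_0$ with any fixed $h_0\in H$ suffices precisely because $H$ is a ternary subgroup, so your choice of $h_0$ is immaterial, which is the one point a reader might pause over.
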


\begin{proof} 
	$S/H$ is a ternary group by Theorem 3.3.1 of \cite{Sj}.
	Now we prove the theorem in three different steps. \\
	\textbf{\underline{Step-I} :}
	First we show that the ternary multiplication  on $S/H$ as defined above is continuous. Let $\pi_H: S\rightarrow S/H$ defined by $ \pi_H(x)=xHH,\forall\,x\in S $ be the quotient map. Let $U$ be an open subset of $S$. Then  $\pi_H(U)=\{xHH:x\in U\}$ which is open in $ S/H $, since $\pi^{-1}_H(\{xHH:x\in U\})=UHH$ which is an open set in $ S $ [by Corollary \ref{ch2:c:closurecor}]. Hence $\pi_H$ is an open map. It then follows that $\pi_H \times \pi_H \times \pi_H$ is also an open map, which is  continuous as well, since $\pi_H$ is continuous. Thus $\pi_H\times \pi_H \times \pi_H$ is a quotient map. Therefore by Theorem  \ref{A: theo},  we have that the ternary multiplication on $S/H$ is continuous.\\
	\textbf{\underline{Step-II} :} In this step we show that the inversion map is continuous on $S/H$. Let $f: S/H\rightarrow S/H$ be defined by $f(aHH)=a^{-1}HH, \forall\,a\in S$. Then $ f $ is the inversion map, as inverse of $ aHH $
	is $ a^{-1} HH $ \big[since $(aHH)(a^{-1}HH)(gHH)=aa^{-1}{g}HH=gHH$,  $\forall\,g\in G$ and  similarly $(gHH)(aHH)(a^{-1}HH)=(a^{-1}HH) (aHH)(gHH)=(gHH)(a^{-1}HH) (aHH)=gHH$, $\forall g\in S$\big].
	
   Let $\mathcal W$ be any open nbd. of $a^{-1}HH$ in $ S/H $. Then $W:=\pi^{-1}_H(\mathcal W)$ is an open set in $ S $ that contains  $a^{-1}$. Since $S$ is a topological ternary group, for this nbd. $W$ of $a^{-1}$, there exists a nbd. $V$ of $a$ in $ S $ such that $V^{-1}\subseteq W$. Let $ \mathcal V:=\pi_H(V) $. Since $ \pi_H $ is an open map (as shown in Step-I), $ \mathcal V $ is an open nbd. of $ aHH $ in $ S/H $. We claim that $ f(\mathcal V)\subseteq \mathcal W $. For this let $ xHH\in\mathcal V $, where $ x\in V $. Then $ x^{-1}\in W $ $ \implies $ $ \pi_H(x^{-1})\in\pi_H(W)=\mathcal W $ $ \implies $ $ f(xHH)=x^{-1}HH=\pi_H(x^{-1})\in\mathcal W $. Thus $ f(\mathcal V)\subseteq \mathcal W $. This shows that $ f $ is continuous at $ aHH $. Then arbitrariness of $ aHH $ proves that $ f $ is continuous on $ S/H $.\\	
	\textbf{\underline{Step-III} :} Finally, we show that $S/H$ is a Hausdorff space. In the proof of the Theorem \ref{ch2:t:clopen} we have shown that $ \{xHH:x\in S\} $ is the collection of all disjoint equivalence classes with respect to the equivalence relation $ \rho:=\{(x,y)\in S\times S:xHH=yHH\} $. Thus $ S/H=S/\rho $. 
	
	We claim that $\rho$ is closed. It follows from the fact that $ H $ is closed, ternary multiplication in $ S $ is continuous and $ S $ is Hausdorff. Again in Step-I we have proved that $ \pi_H $ is an open map. Then by a standard result of general topology (see \cite{Kelley})  we can say that $ S/\rho $ is Hausdorff and hence $ S/H $ is Hausdorff.
	
	In view of Step-I, II and III it follows that $S/H$ is a topological ternary group.
\end{proof}

{\bf Declarations :} 

{\bf Ethical Approval :} Not applicable 



{\bf Funding :} There is no funding for this research work. 

{\bf Availability of data and materials :} There is no associated datasets and materials for this research work.

\end{document}